\numberwithin{equation}{section}
\newcommand{\ma}{\mathcal}
\newcommand{\mf}{\mathfrak}
\newcommand{\m}{\CMcal}
\theoremstyle{plain}
\newtheorem{theorem}{Theorem}[section]
\newtheorem{corollary}[theorem]{Corollary}
\newtheorem{lemma}[theorem]{Lemma}
\newtheorem{proposition}[theorem]{Proposition}
\newtheorem{definition}[theorem]{Definition}
\begin{document}
\title[Bures distance and transition probability for $\alpha$-CPD-kernels]{Bures distance and transition probability \\for $\alpha$-CPD-kernels}
\date{\today}

\author{Santanu Dey}
\address{Department of Mathematics, Indian Institute of Technology Bombay, Powai, Mumbai-400076, India} 
\email{santanudey@iitb.ac.in}

\author{Harsh Trivedi}
\address{Statistics and Mathematics Unit, Indian Statistical Institute, 8th Mile, Mysore Road, Bangalore, 560059, India}
\email{trivediharsh26@gmail.com}

\begin{abstract}
If the symmetry (fixed invertible self adjoint map) of Krein spaces is replaced by a fixed unitary, then we obtain the notion of S-spaces which was
introduced by Szafraniec. 
Assume $\alpha$ to be 
an automorphism on a $C^*$-algebra. In this article, we obtain the Kolmogorov decomposition of $\alpha$-completely positive definite (or $\alpha$-CPD-kernels for short) and investigate
the Bures distance between $\alpha$-CPD-kernels. 
We also define transition probability for these kernels and find a characterization of the transition probability.
\end{abstract}

\keywords{$\alpha$-completely positive definite kernels, completely
positive definite kernels, $C^*$-algebras, Hilbert $C^*$-modules, Bures distance,
 S-modules.}
\subjclass[2010]{46E22,~46L05,~46L08,~47B50,~81T05.} 

\maketitle

\section{Introduction}

The Gelfand-Naimark-Segal (GNS) construction 
for a state on a $C^*$-algebra yields us a representation of the $C^*$-algebra 
on a Hilbert space and a cyclic vector. Bures \cite{B69} gave a distance formula, between given two states on a $C^*$-algebra, 
which is equal to the infimum of norms of differences between the cyclic vectors of corresponding GNS constructions where the infimum is taken over all GNS constructions with common representation space.
A linear map $\tau$ from a $C^*$-algebra $\m B$ to a $C^*$-algebra 
$\m C$ is said to be {\it completely positive} if $\sum_{i,j=1}^n c_j^{*}
\tau(b_j^{*}b_i)c_i\geq 0$ whenever $b_1,b_2,\ldots,b_n\in\m B$; $c_1,c_2,\ldots,c_n\in\m C$ and $n\in \mathbb N$. Stinespring's 
theorem (cf. \cite[Theorem 1] {St55}) 
and Paschke's GNS construction (cf. \cite[Theorem 5.2] {Pas73}) characterize operator valued completely positive maps and those
completely positive maps which takes values in $C^*$-algebras, respectively in a similar way as the GNS construction characterizes states. If we choose 
the completely 
positive maps to be states, then these two constructions coincide with the GNS construction. 
Motivated by the formulation of Bures, a distance
formula is defined in \cite{KSWR08} between $\ma B(\m H)$-valued completely positive maps where the formula is in terms of Stinespring's constructions. 
Distance formulas between two completely positive
maps are useful and have applications in operator theory, quantum information science and mathematical physics 
(cf. \cite{BSu13}, \cite{JKP09} and \cite{H15}, respectively).

Kaplan defined multi-states and proved the GNS construction for them in \cite{Kap89}. In a similar way,
Heo \cite{He99} defined completely multi-positive maps which extends the terminology of completely positive maps. We recall the definition of completely positive definite kernels
(cf. \cite{BBLS04}) which generalizes the notion of completely multi-positive maps (cf. \cite[Note 4.7] {Sk11}):
Let us denote the set of all bounded linear maps from a $C^*$-algebra $\m B$ to a $C^*$-algebra $\m C$ by $\ma
B(\m B,\m C)$. For a set $\Omega$
we say that a mapping $\mf{K}:\Omega\times \Omega \to \ma B(\m B,\m C)$ is a {\it completely positive definite kernel} or a {\it CPD-kernel} over $\Omega$ from
$\m B$ to $\m C$ if\\
\[
\sum_{i,j} c^*_i \mf{K}^{\sigma_i, \sigma_j} (b^*_i b_j) c_j \geq 0~\mbox{for all finite choices of $\sigma_i\in \Omega$, $b_i\in \m B$, $c_i\in \m C$. }~
\]
The dilation theory of these kernels were explored extensively in \cite{BBLS04,DH14}.

Assume $(E,\langle\cdot,\cdot\rangle)$ 
to be a Hilbert $\m{A}$-module where $\m{A}$ is a $C^{\ast}$-algebra and $J$ to be an invertible adjointable map on $E$ such that $J=J^*=J^{-1}$. 
Define a map $[\cdot,\cdot]:E\times E\to \m A$ by
\begin{eqnarray}
[x,y]: =\langle Jx,y\rangle~\mbox{for all}~x,y\in E.
\end{eqnarray}
The triple $(E,\m A,J)$ is called a {\it Krein $\m{A}$-module} which extends the notion of Krein spaces and $J$ is called the {\it symmetry}. In \cite{HJ10} 
a dilation theorem for $\alpha$-completely positive maps, where $\alpha$ is an automorphism, was obtained 
in terms of representations on Krein $C^*$-modules. 
The notion of S-modules 
(cf. \cite{DT15}) which is defined below,
extends the notion of S-spaces \cite{Sz09} and Krein $\m A$-modules:
\begin{definition}
	Let $(E,\langle\cdot,\cdot\rangle)$ be a Hilbert $\m{A}$-module where $\m{A}$ is a $C^{\ast}$-algebra and let $U$ be a unitary on $E$,
	i.e., $U$ is an invertible adjointable map on $E$ such that $U^*=U^{-1}$. Then we can define an
	$\m{A}$-valued sesquilinear form by
	\begin{eqnarray}
	[x,y]: =\langle x,Uy\rangle~\mbox{for all}~x,y\in E.
	\end{eqnarray}
	In this case we call {\rm $(E,\m A, U)$ is an S-module}.
\end{definition}

S-correspondence is an analogue of $C^*$-correspondence in the context of S-modules. In our earlier work we introduced the notion of $\alpha$-completely positive 
definite or $\alpha$-CPD-kernels and for any $\alpha$-CPD-kernel $\mathfrak K$ obtained a partial decomposition theorem using 
reproducing kernel S-correspondences.
Bhat and Sumesh explored 
Bures distance 
between more general completely
positive maps in \cite{BSu13} and their approach was based on von Neumann modules. In this article we
study the Bures distance 
formula between two $\alpha$-CPD-kernels over a set $\Omega$ from $\m B$ to $\m C$ when $\m C$ is a von Neumann algebra. For this we first obtain an important decomposition, called the Kolmogorov decomposition, of $\alpha$-CPD-kernels over $\Omega$ in terms of a tuple consisting of an S-correspondence ${\mathcal F}$ and a map from $\Omega$ to ${\mathcal F}$.

It is shown in section 4 that certain intertwiners between (minimal) Kolmogrorov decompositions of two $\alpha$-CPD-kernels can be used to compute the Bures distance between the $\alpha$-CPD-kernels.  Suppose $\m B$ is a von Neumann algebra. The rigidity theorem in this section establishes that if the Bures distance between a CPD-kernel over $\Omega$ from $\m B$ to $\m B$ and the identity kernel is bounded by certain constant, then the Kolmogorov decomposition of the CPD-kernel contains  a copy of $\m B$.
The notion of transition probability between two states over a unital $*$-algebra is introduced by Uhlmann in \cite{Uh76}. 
Alberti \cite{Al83} found 
several techniques to compute the transition probability between two states over a unital $C^*$-algebra. 
Recently Heo \cite{H15} developed a Bures distance formula between $\alpha$-CP maps and considered unbounded representations of a $*$-algebra on 
a Krein space
to study transition probability between P-functionals.
In the last section we define the notion of transition probability between two $\alpha$-CPD-kernels and do an analysis of this notion
based on results known for transition probability between two states, etc.

\subsection{Background and notations}

We recall the definitions of Hilbert $C^*$-modules and von Neumann modules: 

\begin{definition}
	Assume $\m B$ to be a $C^*$-algebra. Let $E$ be a complex vector space which is a
	right $\m B$-module such that the module action is compatible with the scalar product. The module $E$ is called 
	a {\rm Hilbert $C^*$-module over $\m B$} or a  {\rm Hilbert $\m B$-module} if there exists a mapping $\langle
	\cdot,\cdot \rangle : E \times E \to \m{B}$ satisfying the following conditions:
	\begin{itemize}
		\item [(i)] $\langle x,x \rangle \geq 0 ~\mbox{for}~ x \in {E}  $ and $\langle x,x \rangle = 0$ only if $x = 0 ,$
		\item [(ii)] $\langle x,yb \rangle = \langle x,y \rangle b ~\mbox{for}~ x,y \in {E}$ and $~\mbox{for}~ b\in \m B,  $
		\item [(iii)]$\langle x,y \rangle=\langle y,x \rangle ^*~\mbox{for}~ x ,y\in {E} ,$
		\item [(iv)]$\langle x,\mu y+\nu z \rangle = \mu \langle x,y \rangle +\nu \langle x,z \rangle ~\mbox{for}~ x,y,z \in {E} $ and for 
		$\mu,\nu \in \mathbb{C},$
		\item [(v)] $E$ is complete with respect to the norm $\| x\| :=\|\langle
		x,x\rangle\|^{1/2} ~\mbox{for}~ x \in {E}$. 
	\end{itemize} 
\end{definition}

Let $\ma B(\m H,\m H')$ be the space of all bounded linear operators from Hilbert space $\m H$ to Hilbert space $\m H'$. 
If $\m B$ is a von Neumann algebra acting on a Hilbert space $\m H,$ then for each Hilbert $\m
B$-module $E$ the interior tensor product $E\bigotimes \m H$ is a Hilbert space. Fix $x\in E$ and define a bounded linear
map $L_x$ from $\m H$ to $E\bigotimes \m H$ by
$$L_x (h):=x\otimes h~\mbox{for}~ h\in \m H.$$
We identify each $x\in E$ with $L_x$, because $L^*_{x_1} L_{x_2} =\langle x_1,x_2\rangle~\mbox{for all}~
x_1,x_2\in E$. Therefore  
$E$ is a concrete submodule of $\ma B(\m H,E\bigotimes \m H)$. 
\begin{definition}
	The Hilbert $\m
	B$-module $E$ is called a {\rm von Neumann $\m B$-module}
	or a {\rm von Neumann
		module over $\m B$} if it is strongly closed in $\ma B(\m
	H,E\bigotimes \m H)$. 
\end{definition}
Every von Neumann
$\m B$-module is complemented in Hilbert B-modules
which contains it as a $\m B$-submodule because all von Neumann $\m B$-modules are self-dual. \cite{Sk06} contains a detailed exposition on von Neumann modules. In particular, if we denote the set of all {\it adjointable
	maps} on $E$ by $\ma B^a(E)$, then the map from $\ma B^a(E)$ to $\ma B(E\bigotimes \m H)$ defined by $a\mapsto a\otimes id_{\m H}$ is a unital 
$*$-homomorphism, and hence it is an isometry. 
Therefore $\ma B^a(E)\subset \ma B(E\bigotimes \m H)$.

We further recall the definition
of an $\alpha$-completely positive definite kernel, which is central to our study here, from \cite{DT15}:

\begin{definition}\label{def1}
	Suppose $\m B$ and $\m C$ are unital $C^*$-algebras. We denote the set of all bounded linear maps from  $\m B$ to $\m C$ by $\ma
	B(\m B,\m C)$. Let
	$\alpha$ be an automorphism on $\m B$, i.e., $\alpha:\m B\to \m B$ is a bijective unital $*$-homomorphism. For a set $\Omega$, by a {\rm kernel} 
	$\mf K$ over $\Omega$ from $\m B$ to $\m C$ we mean a function $\mf{K}:\Omega\times \Omega \to \ma B(\m B,\m C)$, 
	and $\mf K$ is called {\rm Hermitian} if $\mf{K}^{\sigma, \sigma'}(b^*)=\mf{K}^{\sigma', \sigma}(b)^*$ for all $\sigma,\sigma'\in \Omega$ 
	and $b\in\m B$.
	We say that a Hermitian kernel $\mf K$ over $\Omega$ from $\m B$ to $\m C$ is an {\rm $\alpha$-completely positive definite kernel} or
	an {\rm $\alpha$-CPD-kernel} over $\Omega$ from $\m B$ to $\m C$ if for finite choices $\sigma_i\in \Omega$, $b_i\in \m B$, $c_i\in \m C$ we have
	\begin{itemize}
		\item [(i)] $ \sum_{i,j} c^*_i \mf{K}^{\sigma_i, \sigma_j} (\alpha(b_i)^* b_j) c_j \geq 0, $
		\item [(ii)] $ \mf{K}^{\sigma_i, \sigma_j} (\alpha(b))= 
		\mf{K}^{\sigma_i, \sigma_j}(b) $ for all $b\in\m B$,
		\item [(iii)] for each $b\in\m B$ there exists $M(b)>0$ such that 
		\begin{eqnarray*}
			\left\|\sum\limits_{i,j=1}^n c^*_i  \mf{K}^{\sigma_i,\sigma_j}(\alpha(b^*_ib^*)bb_j) c_j\right\|
			&\leq & M(b)\left\|\sum\limits_{i,j=1}^n   c^*_i \mf{K}^{\sigma_i,\sigma_j}(\alpha(b^*_i)b_j) c_j\right\|.
		\end{eqnarray*}
	\end{itemize}
	We use the notation $\m K^{\alpha}_{\Omega}(\m B,\m C)$ for the set of all $\alpha$-CPD-kernels over $\Omega$ from $\m B$ to $\m C$. 
\end{definition}

\section{Kolmogorov decomposition for $\alpha$-CPD-kernels}\label{sec1}

Assume $E_1$ and $E_2$ are Hilbert $C^*$-modules over a $C^*$-algebra $\m B$ such that $\left( {E}_{1},\m B,U_{1}\right) $ and $\left( E
_{2},\m B,U_{2}\right) $ are S-modules. Then for each adjointable operator $T$ from
${E}_{1}$ to ${E}_{2}$, there exists an operator $T^{\natural}$ from $
{E}_{2}$ to ${E}_{1}$ satisfying the following
\[
\langle T(x),U_2 y\rangle=\langle x,U_1T^\natural(y)\rangle~\mbox{for all}~x\in E_1,~y\in E_2.
\]
For instance, $T^{\natural}=U^*_{1}T^{\ast }U_{2}$. 
\begin{definition} Let $\m A$ and $\m{B}$ be unital $C^{\ast}$-algebras, and let $(E,\m B,U)$ be an S-module. 
\begin{itemize}
 \item [(i)] An algebra homomorphism $\pi :\m{A}\rightarrow
\mathcal{B}^a({E})$ is called an {\rm $U$-representation of $\m{A}$ on $(E,\m B,U)$} if
$\pi (a^{\ast })=U^*\pi (a)^{\ast }U=\pi (a)^{\natural}$, i.e., $$[\pi (a)x ,y]=[x ,\pi (a^{\ast })y]~\mbox{for all}~x,y\in E.$$
\item [(ii)] The S-module $(E,\m B,U)$ is called an {\rm S-correspondence from $\m A$ 
 to $\m B$} if there exists a $U$-representation $\pi$ of $\m{A}$ on $(E,\m B,U)$, i.e., $E$ is also a left $\m A$-module with 
 \[
  ax:=\pi(a)x~\mbox{for all}~a\in\m A,x\in E.
 \]
\end{itemize}

 \end{definition}
 In the next theorem we obtain the Kolmogorov decomposition for an $\alpha$-CPD-kernel. 
  \begin{theorem}\label{thm1}
Assume $\m C$ to be a $C^*$-algebra and $\Omega$ to be a set. Suppose $\alpha$ is an automorphism on a unital $C^*$-algebra $\m B$
and $\mf{K}:\Omega\times \Omega\to \ma B(\m B,\m C)$ is a Hermitian kernel. 
Then the following conditions are equivalent:
  \begin{itemize}
 \item [(i)] The kernel $\mf{K}$ is an $\alpha$-CPD-kernel.
 \item [(ii)] There exists a pair $(\m F, \mf{i})$ consisting of an S-correspondence $\m F$ from $\m B$ to $\m C$ and 
a map $\mf{i}: \Omega\to \m F$ such that $ \overline{\mbox{span}}\{b\mf{i}(\sigma) c:b\in\m
B,~\sigma\in\Omega,~c\in\m C\}=\m F$ and
\begin{align}\label{eqn*}  \mf{K}^{\sigma,\sigma'}(b)= \langle \mf{i}(\sigma), b
\mf{i}(\sigma')\rangle=\langle \alpha(b^*)\mf{i}(\sigma), 
\mf{i}(\sigma')\rangle~\mbox{for}~\sigma, \sigma'\in \Omega;~b\in \m B.
 \end{align}
\end{itemize}
\end{theorem}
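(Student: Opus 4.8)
The plan is to prove the two implications separately. The implication (ii)$\Rightarrow$(i) is a direct verification, whereas (i)$\Rightarrow$(ii) requires constructing the S-correspondence by a Gelfand--Naimark--Segal-type (Kolmogorov) construction twisted by $\alpha$.

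For (i)$\Rightarrow$(ii), I would form the algebraic object $\m F_0:=\bigoplus_{\sigma\in\Omega}(\m B\otimes\m C)$ (algebraic tensor products over $\mathbb C$), whose typical element is a finite sum $\sum_k b_k\otimes\sigma_k\otimes c_k$, made into a right $\m C$-module by $(b\otimes\sigma\otimes c)c':=b\otimes\sigma\otimes cc'$, and equip it with the $\m C$-valued form
\[
\Big\langle\sum_i b_i\otimes\sigma_i\otimes c_i\,,\,\sum_j b_j\otimes\sigma_j\otimes c_j\Big\rangle:=\sum_{i,j}c_i^{*}\,\mf K^{\sigma_i,\sigma_j}\!\big(\alpha(b_i)^{*}b_j\big)\,c_j .
\]
This is well defined (the right side is $\mathbb C$-biadditive and correctly balanced in $(b_i,c_i)$ since $\mf K^{\sigma,\sigma'}$ is linear and $\alpha(\cdot)^{*}$ is conjugate-linear), and $\langle x,x\rangle\ge 0$ for all $x\in\m F_0$ is exactly Definition~\ref{def1}(i); Hermiticity $\langle x,y\rangle=\langle y,x\rangle^{*}$ then follows by polarization, because $\langle x\pm y,x\pm y\rangle$, $\langle x\pm iy,x\pm iy\rangle$, $\langle x,x\rangle$, $\langle y,y\rangle$ are all self-adjoint (being positive). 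Dividing by the length-zero submodule and completing yields a Hilbert $\m C$-module $\m F$; set $\mf i(\sigma):=[1\otimes\sigma\otimes 1]$. The left action $\pi(b)(b'\otimes\sigma'\otimes c'):=(bb')\otimes\sigma'\otimes c'$ is a homomorphism which, on generators, is adjointable with $\pi(b)^{*}=\pi(\alpha(b^{*}))$ (using $\alpha(b)^{*}=\alpha(b^{*})$); adjointability makes it bounded, so it passes to $\m F$. Finally put $U(b'\otimes\sigma'\otimes c'):=\alpha(b')\otimes\sigma'\otimes c'$; on generators $\langle Ux,Uy\rangle=\langle x,y\rangle$ because $\mf K^{\sigma,\sigma'}\!\big(\alpha(\alpha(b_i)^{*}b_j)\big)=\mf K^{\sigma,\sigma'}\!\big(\alpha(b_i)^{*}b_j\big)$ by Definition~\ref{def1}(ii), and $U$ is bijective on $\m F_0$ (as $\alpha$ is), hence descends to a surjective isometry, i.e. a unitary with $U^{*}=U^{-1}$. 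A one-line check gives $U\pi(b)U^{*}=\pi(\alpha(b))$, whence $\pi(b)^{\natural}=U^{*}\pi(b)^{*}U=\pi(b^{*})$, so $\pi$ is a $U$-representation and $(\m F,\m C,U)$ is an S-correspondence from $\m B$ to $\m C$. Cyclicity is immediate ($b\mf i(\sigma)c=[b\otimes\sigma\otimes c]$), and \eqref{eqn*} holds since $\langle\mf i(\sigma),b\mf i(\sigma')\rangle=\mf K^{\sigma,\sigma'}(\alpha(1)^{*}b)=\mf K^{\sigma,\sigma'}(b)$ and $\langle\alpha(b^{*})\mf i(\sigma),\mf i(\sigma')\rangle=\mf K^{\sigma,\sigma'}\!\big(\alpha(\alpha(b^{*}))^{*}\big)=\mf K^{\sigma,\sigma'}(b)$, again by Definition~\ref{def1}(ii).

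For (ii)$\Rightarrow$(i): $\mf K$ is Hermitian by hypothesis, and Definition~\ref{def1}(ii) is a quick consequence of \eqref{eqn*} together with Hermiticity, namely $\mf K^{\sigma,\sigma'}(b)=\langle\alpha(b^{*})\mf i(\sigma),\mf i(\sigma')\rangle=\langle\mf i(\sigma'),\alpha(b^{*})\mf i(\sigma)\rangle^{*}=\mf K^{\sigma',\sigma}(\alpha(b^{*}))^{*}=\mf K^{\sigma,\sigma'}(\alpha(b))$. For Definition~\ref{def1}(i) and (iii) I would expand $\sum_{i,j}c_i^{*}\mf K^{\sigma_i,\sigma_j}(\alpha(b_i)^{*}b_j)c_j$ using both identities of \eqref{eqn*} and the defining relation $[\pi(a)x,y]=[x,\pi(a^{*})y]$ of a $U$-representation; using cyclicity of $\mf i$ (which forces the covariance of $\pi$ along the vectors $\mf i(\sigma)$) this rewrites the sum as $\langle\xi,\xi\rangle\ge 0$ with $\xi:=\sum_i\pi(b_i)\mf i(\sigma_i)c_i$. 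Replacing $b_i$ by $bb_i$ in the same computation identifies the left-hand side of Definition~\ref{def1}(iii) with $\|\pi(b)\xi\|^{2}\le\|\pi(b)\|^{2}\|\xi\|^{2}=\|\pi(b)\|^{2}\big\|\sum_{i,j}c_i^{*}\mf K^{\sigma_i,\sigma_j}(\alpha(b_i)^{*}b_j)c_j\big\|$, so one may take $M(b)=\|\pi(b)\|^{2}$.

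The step I expect to be the main obstacle is keeping the roles of $\alpha$ and of the symmetry $U$ consistent throughout. In the construction this is concentrated in checking that $U$ is isometric and that $\pi$ is genuinely a $U$-representation (rather than an ordinary $*$-representation): this is precisely where Definition~\ref{def1}(ii), invariance under $\alpha$, enters, and it is what separates this result from the Kolmogorov decomposition of an ordinary CPD-kernel. In the converse direction the corresponding delicate point is that, for a minimal (cyclic) pair $(\m F,\mf i)$, the $U$-representation $\pi$ is automatically compatible with $\mf i$ on the cyclic vectors, which is exactly what converts the $\alpha$-twisted double sum in Definition~\ref{def1}(i) and (iii) into an honest square; once that compatibility is in hand, the remaining estimates are routine.
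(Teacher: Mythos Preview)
Your overall architecture matches the paper's: a direct verification for (ii)$\Rightarrow$(i) and a twisted Kolmogorov/GNS construction for (i)$\Rightarrow$(ii), with $U$ implementing $\alpha$ on the $\m B$-tensor leg. However, there is a genuine gap in the construction direction.

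You never use condition (iii) of Definition~\ref{def1}. When you write that $\pi(b)$ ``is adjointable with $\pi(b)^{*}=\pi(\alpha(b^{*}))$; adjointability makes it bounded, so it passes to $\m F$'', you are invoking the fact that adjointable maps between \emph{complete} Hilbert $C^*$-modules are automatically bounded. But at that stage $\pi(b)$ is only defined on the pre-Hilbert module $\m F_0/K$, which is not complete, and a densely defined operator with a densely defined formal adjoint need not be bounded. In the untwisted case this is harmless because the formal adjoint gives $\langle\pi(b)x,\pi(b)x\rangle=\langle x,\pi(b^*b)x\rangle\le\|b\|^2\langle x,x\rangle$; here, however, the formal adjoint identity yields $\langle\pi(b)x,\pi(b)x\rangle=\langle x,\pi(\alpha(b)^*b)x\rangle$, and $\alpha(b)^*b$ is not even self-adjoint in general, so no such estimate is available. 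The paper closes this gap by computing directly
\[
\Bigl\|\pi(b)\Bigl(\sum_i b_i\otimes e_{\sigma_i}\otimes c_i+K\Bigr)\Bigr\|^2
=\Bigl\|\sum_{i,j}c_i^*\,\mf K^{\sigma_i,\sigma_j}\!\bigl(\alpha(bb_i)^*bb_j\bigr)c_j\Bigr\|
\le M(b)\,\Bigl\|\sum_i b_i\otimes e_{\sigma_i}\otimes c_i+K\Bigr\|^2,
\]
which is exactly condition (iii). Without it you cannot extend $\pi(b)$ to the completion $\m F$, and hence cannot make sense of the $U$-representation on $\m F$.

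A secondary point: you set $\mf i(\sigma)=[1\otimes\sigma\otimes 1]$, but $\m C$ is not assumed unital in the theorem. The paper handles this by defining $\mf i(\sigma)=\lim_\mu\bigl(1_{\m B}\otimes e_\sigma\otimes u_\mu+K\bigr)$ along an approximate identity $(u_\mu)$ of $\m C$.
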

\begin{proof}

We assume the statement (ii) holds.  Then from Equation \ref{eqn*} it follows that 
\begin{align*}
  \displaystyle\sum_{i=1}^n \displaystyle\sum_{j=1}^n c^*_i \mf{K}^{\sigma_i, \sigma_j} (\alpha(b^*_i) b_j) c_j
 &=\displaystyle\sum_{i=1}^n \displaystyle\sum_{j=1}^n c^*_i  \langle \mf{i}(\sigma_i), \alpha(b^*_i) b_j
\mf{i}(\sigma_j)\rangle c_j
=\displaystyle\sum_{i=1}^n \displaystyle\sum_{j=1}^n c^*_i  \langle b_i\mf{i}(\sigma_i),  b_j
\mf{i}(\sigma_j)\rangle c_j 
\\ &= \left<\displaystyle\sum_{i=1}^n b_i\mf i(\sigma_i)c_i
, \displaystyle\sum_{j=1}^n b_j \mf i(\sigma_j)c_j\right>
 \geq 0
\end{align*}
for all $\sigma_1,\sigma_1,\ldots,\sigma_n\in \Omega$, $b_1,b_2,\ldots,b_n\in \m B$, $c_1,c_2,\ldots,c_n\in \m C$. Further, 
for all $b\in\m B$ and $\sigma,\sigma'\in \Omega$ we get
\begin{align*}
 \mf{K}^{\sigma, \sigma'}(\alpha(b))
 &=\langle \mf{i}(\sigma), \alpha(b)
\mf{i}(\sigma')\rangle
 =\langle b^*\mf{i}(\sigma), 
\mf{i}(\sigma')\rangle
\\ &=(\langle \mf{i}(\sigma'),b^* \mf{i}(\sigma)\rangle)^*=\mf{K}^{\sigma', \sigma}(b^*)^*=\mf{K}^{\sigma, \sigma'}(b).
\end{align*}
Finally, for a fixed $b\in \m B$ and each $\sigma_1,\sigma_1,\ldots,\sigma_n\in \Omega$, $b_1,b_2,\ldots,b_n\in \m B$ and $c_1,c_2,\ldots,c_n\in \m C$ we obtain
\begin{align*}
 &\left\|\sum\limits_{i,j=1}^n c^*_i  \mf{K}^{\sigma_i,\sigma_j}(\alpha(b^*_ib^*)bb_j) c_j\right\|
 =\left\|\sum\limits_{i,j=1}^n c^*_i  \langle \mf{i}(\sigma_i), \alpha(b^*_ib^*)bb_j
\mf{i}(\sigma_j)\rangle c_j
\right\|
  \\&=\left\|\sum\limits_{i,j=1}^n c^*_i  \langle b_i\mf{i}(\sigma_i), \alpha(b^*)bb_j
\mf{i}(\sigma_j)\rangle c_j
\right\|
=\left\| \left< \sum_{i=1}^n b_i\mf{i}(\sigma_i)c_i, \alpha(b^*)b\left(\sum_{j=1}^n b_j
\mf{i}(\sigma_j) c_j\right)\right>
\right\|
     \\&\leq\|\alpha(b)^*b\|\left\| \sum_{i=1}^n b_i\mf{i}(\sigma_i)c_i \right\|^2
       \leq\|b\|^2\left\|\left< \sum_{i=1}^n b_i\mf{i}(\sigma_i)c_i,\sum_{j=1}^n b_j
\mf{i}(\sigma_j) c_j\right> \right\|
       \\&=\|b\|^2\left\|\sum\limits_{i,j=1}^n   c^*_i \mf{K}^{\sigma_i,\sigma_j}(\alpha(b^*_i)b_j) c_j\right\|.
\end{align*}
Thus the function $\mf{K}$ is an $\alpha$-CPD-kernel, i.e., (i) holds.

Conversely, assume that the statement (i) holds. Let $\Omega_{\mathbb{C}}$ be the vector space $\bigoplus_{\sigma\in \Omega}\mathbb C$, i.e., 
 $$\Omega_{\mathbb{C}}=\{(\lambda_{\sigma})_{\sigma\in \Omega}: \lambda_{\sigma}~\mbox{is non-zero for finitely many}~\sigma\in \Omega\}.$$
 We denote the element $(\delta_{\sigma,\sigma'})_{\sigma'\in \Omega}$ of $\Omega_{\mathbb{C}}$ by $e_{\sigma}$ for each $\sigma\in \Omega$. 
 The vector space tensor product $\m F_0:=\m B\bigotimes \Omega_{\mathbb{C}}\bigotimes\m C$ is a $\m B$-$\m C$ bimodule in a natural way. Define a sesquilinear mapping
 $\langle\cdot,\cdot\rangle:\m F_0\times \m F_0\to \m C$ by
 $$\left< \displaystyle\sum_{i=1}^nb_i\otimes e_{\sigma_i} \otimes c_i,\displaystyle\sum_{j=1}^m b^{\prime}_j\otimes e_{\sigma^{\prime}_j} \otimes c^{\prime}_j\right>:=
 \displaystyle\sum_{i=1}^n \displaystyle\sum_{j=1}^m c^{*}_i\mf K^{\sigma_i,\sigma^{\prime}_j}(\alpha(b_i)^*b^{\prime}_j)c^{\prime}_j$$
for all $b_i,b^{\prime}_j\in\m B;~c_i,c^{\prime}_j\in\m C;\sigma_i,\sigma^{\prime}_j\in \Omega$ where $1\leq i\leq n$ and $1\leq j\leq m$. The map $\langle\cdot,\cdot\rangle$ is in fact positive definite, 
since the kernel $\mf K$ is an $\alpha$-CPD-kernel (cf. Definition \ref{def1} $(i)$). 
Using the Cauchy-Schwarz inequality for positive-definite sesquilinear forms we conclude that 
$$K :=\left\{ \displaystyle\sum_{i=1}^nb_i\otimes e_{\sigma_i} \otimes c_i\in \m F_0:
\displaystyle\sum_{i=1}^n \displaystyle\sum_{j=1}^n c^{*}_i\mf K^{\sigma_i,\sigma_j}(\alpha(b_i)^*b_j)c_j =0\right\}$$
is a submodule of
$\m F_0$. Therefore $\langle\cdot,\cdot\rangle$ induces
canonically on the quotient module $\m F_0/ K$ a $\m C$-valued inner product. 
Henceforth we denote this induced inner-product by $\langle\cdot,\cdot\rangle$ itself. Let $\m F$ be
the Hilbert $\m{C}$-module obtained by the
completion of $\m F_0/K$. 

Define a linear map $U:\m F\to\m F$ by
\[
U\left( \displaystyle\sum_{i=1}^nb_i\otimes e_{\sigma_i} \otimes c_i+K\right) \
=\displaystyle\sum_{i=1}^n \alpha(b_i)\otimes e_{\sigma_i} \otimes c_i +K~\mbox{where}~b_i\in \m B,~c_i\in \m C~\mbox{and}~\sigma_i\in \Omega.  
\]
Here $U$ is a unitary, since
\begin{eqnarray*}
 &&\left< U\left( \displaystyle\sum_{i=1}^nb_i\otimes e_{\sigma_i} \otimes c_i+K\right) ,
 U\left( \displaystyle\sum_{j=1}^m b^{\prime}_j\otimes e_{\sigma^{\prime}_j} \otimes c^{\prime}_j+K\right) \right>
 \\ &=&\left<\displaystyle\sum_{i=1}^n \alpha(b_i)\otimes e_{\sigma_i} \otimes c_i+K , \displaystyle\sum_{j=1}^m \alpha(b^{\prime}_j)\otimes e_{\sigma^{\prime}_j} \otimes c^{\prime}_j+K \right>
 \\&=&\displaystyle\sum_{i=1}^n \displaystyle\sum_{j=1}^m c^{*}_i\mf K^{\sigma_i,\sigma^{\prime}_j}(\alpha(\alpha(b_i))^*\alpha(b^{\prime}_j))c^{\prime}_j 
 =\displaystyle\sum_{i=1}^n \displaystyle\sum_{j=1}^m c^{*}_i\mf K^{\sigma_i,\sigma^{\prime}_j}(\alpha(b_i)^*b^{\prime}_j)c^{\prime}_j 
\\&=&\left< \displaystyle\sum_{i=1}^nb_i\otimes e_{\sigma_i} \otimes c_i+K , \displaystyle\sum_{j=1}^mb^{\prime}_j\otimes e_{\sigma^{\prime}_j} \otimes c^{\prime}_j+K\right>,
\end{eqnarray*}
for all $b_i,b^{\prime}_j \in\m B,~c_i,c^{\prime}_j\in\m C,~ \sigma_i,\sigma^{\prime}_j\in\Omega$ for $1\leq i\leq n,~1\leq j\leq m$, and since 
$U$ is surjective. In a similar way it follows that the linear map $\displaystyle\sum_{j=1}^m b_i\otimes e_{\sigma^{\prime}_j}
\otimes c^{\prime}_j
+K\mapsto\displaystyle\sum_{j=1}^m\alpha^{-1}(b^{\prime}_j)\otimes e_{\sigma^{\prime}_j} \otimes c^{\prime}_j+K$ is isometric and hence well-defined.
Since
\begin{eqnarray*}
 &&\left< U\left( \displaystyle\sum_{i=1}^nb_i\otimes e_{\sigma_i} \otimes c_i+K\right) ,\displaystyle\sum_{j=1}^m b^{\prime}_j\otimes e_{\sigma^{\prime}_j} \otimes c^{\prime}_j+K \right>
 \\&=&\left< \displaystyle\sum_{i=1}^n\alpha(b_i)\otimes e_{\sigma_i} \otimes c_i+K,\displaystyle\sum_{j=1}^m b_i\otimes e_{\sigma^{\prime}_j} \otimes c^{\prime}_j+K \right>
\\&=&\displaystyle\sum_{i=1}^n \displaystyle\sum_{j=1}^m c^{*}_i\mf K^{\sigma_i,\sigma^{\prime}_j}(\alpha(\alpha(b_i))^*b^{\prime}_j)c^{\prime}_j
=\displaystyle\sum_{i=1}^n \displaystyle\sum_{j=1}^m c^{*}_i\mf K^{\sigma_i,\sigma^{\prime}_j}(\alpha(\alpha(b_i))^*\alpha(\alpha^{-1}(b^{\prime}_j))c^{\prime}_j 
\\&=&\left< \displaystyle\sum_{i=1}^nb_i\otimes e_{\sigma_i} \otimes c_i+K , \displaystyle\sum_{j=1}^m\alpha^{-1}(b^{\prime}_j)\otimes e_{\sigma^{\prime}_j} \otimes c^{\prime}_j+K\right>,
\end{eqnarray*}
we obtain $U^* \left(\displaystyle\sum_{j=1}^m b_i\otimes e_{\sigma^{\prime}_j} \otimes c^{\prime}_j
+K\right)=\displaystyle\sum_{j=1}^m\alpha^{-1}(b^{\prime}_j)\otimes e_{\sigma^{\prime}_j} \otimes c^{\prime}_j+K$. 

Define a sesquilinear form
$[\cdot,\cdot]:\m F\times\m F\to\m C$ as follows:
\[ [f,f']:=\langle f,Uf'\rangle~\mbox{where}~f,f'\in \m F.
 \]
Indeed, for $\displaystyle\sum_{i=1}^nb_i\otimes e_{\sigma_i} \otimes c_i+K$, $\displaystyle\sum_{j=1}^m b^{\prime}_j\otimes e_{\sigma^{\prime}_j} \otimes c^{\prime}_j+K\in \m F$ we obtain
\begin{eqnarray*}
&& \left[\displaystyle\sum_{i=1}^nb_i\otimes e_{\sigma_i} \otimes c_i+K ,\displaystyle\sum_{j=1}^m b^{\prime}_j\otimes e_{\sigma^{\prime}_j} \otimes c^{\prime}_j+K\right] 
 \\&=&  \left<\displaystyle\sum_{i=1}^nb_i\otimes e_{\sigma_i} \otimes c_i+K,\displaystyle\sum_{j=1}^m \alpha(b^{\prime}_j)\otimes e_{\sigma^{\prime}_j} \otimes c^{\prime}_j+K\right> .
\end{eqnarray*}
Thus the module $\left( \m F,\m C,U\right) $ is an S-module. Define the map $
\pi:\m{B}\rightarrow \mathcal{B}^a(\m F)$ by
\begin{equation}
\pi(b)\left(\displaystyle\sum_{i=1}^nb_i\otimes e_{\sigma_i} \otimes c_i+K\right) =\displaystyle\sum_{i=1}^nbb_i\otimes e_{\sigma_i} \otimes c_i+K~  \label{m12}
\end{equation}
$\mbox{for all}~b,b_i\in \m{B};~c_i\in \m C;~\sigma_i\in\Omega$ for $1\leq i\leq n$. We have
\begin{eqnarray*}
 &&\left\|\pi(b)\left(\displaystyle\sum_{i=1}^nb_i\otimes e_{\sigma_i} \otimes c_i+K\right)\right\|^2 
 = \left\|\displaystyle\sum_{i=1}^nbb_i\otimes e_{\sigma_i} \otimes c_i+K\right\|^2
 \\ &=& \left\|\left< \displaystyle\sum_{i=1}^nbb_i\otimes e_{\sigma_i} \otimes c_i+K, \displaystyle\sum_{j=1}^nbb_j\otimes e_{\sigma_j} \otimes c_j+K\right>\right\|
 \\&=& \left\|\displaystyle\sum_{i=1}^n \displaystyle\sum_{j=1}^n c^{*}_i\mf K^{\sigma_i,\sigma_j}(\alpha(bb_i)^*bb_j)c_j\right\|
 \\ &\leq & M(b)\left\|\sum\limits_{i,j=1}^n   c^*_i \mf{K}^{\sigma_i,\sigma_j}(\alpha(b^*_i)b_j) c_j\right\|
  = M(b)\left\|\displaystyle\sum_{i=1}^nb_i\otimes e_{\sigma_i} \otimes c_i+K\right\|^2 
\end{eqnarray*}
where $b,b_i\in \m{B};~c_i\in \m C;~\sigma_i\in\Omega$ for $1\leq i\leq n$.
Thus for each $b\in\m B$, $\pi(b)$ is a well-defined bounded linear operator from $\m F$ to $\m F$. Using
\begin{eqnarray*}
&&\left< \pi(b)\left(\displaystyle\sum_{i=1}^nb_i\otimes e_{\sigma_i} \otimes c_i+K \right),\displaystyle\sum_{j=1}^m b^{\prime}_j\otimes e_{\sigma^{\prime}_j} \otimes c^{\prime}_j+K\right>
 \\&=&  \left< \displaystyle\sum_{i=1}^n bb_i\otimes e_{\sigma_i} \otimes c_i+K ,\displaystyle\sum_{j=1}^m b^{\prime}_j\otimes e_{\sigma^{\prime}_j} \otimes c^{\prime}_j+K\right>
=  \displaystyle\sum_{i=1}^n \displaystyle\sum_{j=1}^m c^{*}_i\mf K^{\sigma_i,\sigma^{\prime}_j}(\alpha(bb_i)^*b^{\prime}_j)c^{\prime}_j
\\&=&  \displaystyle\sum_{i=1}^n \displaystyle\sum_{j=1}^m c^{*}_i\mf K^{\sigma_i,\sigma^{\prime}_j}(\alpha(b_i)^*\alpha(b^*)b^{\prime}_j)c^{\prime}_j
\\ &=& \left< \displaystyle\sum_{i=1}^nb_i\otimes e_{\sigma_i} \otimes c_i+K ,\displaystyle\sum_{j=1}^m \alpha(b^*)b^{\prime}_j\otimes e_{\sigma^{\prime}_j} \otimes c^{\prime}_j+K\right>
\end{eqnarray*}
and
\begin{eqnarray*}
U\pi(b^*)U^*\left(\displaystyle\sum_{i=1}^nb_i\otimes e_{\sigma_i} \otimes c_i+K \right) &=&
U\pi(b^*)\left(\displaystyle\sum_{i=1}^n \alpha^{-1}(b_i)\otimes e_{\sigma_i} \otimes c_i+K\right)
 \\ &=&  U\left(\displaystyle\sum_{i=1}^n b^*\alpha^{-1}(b_i)\otimes e_{\sigma_i} \otimes c_i+K\right)
\\ &=&  \displaystyle\sum_{i=1}^n \alpha(b^*\alpha^{-1}(b_i))\otimes e_{\sigma_i} \otimes c_i+K
\\ &=& \displaystyle\sum_{i=1}^n \alpha(b^*)b_i\otimes e_{\sigma_i} \otimes c_i+K
\end{eqnarray*}
for all $b,b_i\in \m{B};~c_i\in \m C$ and $\sigma_i\in\Omega$ whenever $1\leq i\leq n$, it follows that 
$\pi:\m{B}\rightarrow \mathcal{B}^a(\m F)$
is a well-defined map. Thus $\pi:\m{B}\to \mathcal{B}^a(\m F)$ is an $U$-representation. Consider
$\mf{i}(\sigma)=\lim_{\mu} 1_{\m B}\otimes e_{\sigma} \otimes u_{\mu}+K$ where $(u_{\mu})$ is the approximate identity of $\m C$. 
Thus 
\begin{equation}\label{eqn1}
 \overline{\mbox{span}}\{b\mf{i}(\sigma) c:b\in\m
B,~\sigma\in\Omega,~c\in\m C\}=\m F.
\end{equation}

Finally
\begin{align*}\langle \mf{i}(\sigma),\pi(b)\mf{i}(\sigma')\rangle 
 &=\left< \lim_{\mu} 1_{\m B}\otimes e_{\sigma} \otimes u_{\mu}+K,b\left(\lim_{\mu'} 1_{\m B}\otimes e_{\sigma} \otimes u_{\mu'}+K\right)\right>
 \\& =\lim_{\mu}\lim_{\mu'}\langle  1_{\m B}\otimes e_{\sigma} \otimes u_{\mu}+K, b\otimes e_{\sigma} \otimes u_{\mu'}+K\rangle
 \\& =\lim_{\mu}\lim_{\mu'} u_{\mu}^*\mf K^{\sigma,\sigma^{\prime}}(b)u_{\mu'}=\mf K^{\sigma,\sigma^{\prime}}(b)
 ~\mbox{for every}~b\in\m B~\mbox{and}~\sigma,\sigma'\in\Omega.\qedhere
\end{align*}
\end{proof}

  
We refer the triple $(\m F,U,\mf i)$ of the above theorem as the {\it Kolmogorov decomposition} for $\mf K$ and the property
of the triple described by Equation \ref{eqn1} as 
the {\it minimality} property. If $(\m F',U',\mf i')$ is another minimal Kolmogorov decomposition for $\mf K$ with $U'(b\mf i'(\sigma)c):=\alpha(b)\mf i'(\sigma)c$ for all $b\in \m B;~\sigma\in
\Omega~\mbox{and}~c\in\m C$, then it is easy to
see that $\mf i(\sigma)\mapsto \mf i'(\sigma)$ for each
$\sigma\in\Omega$ is an isomorphism between these decompositions. Thus Kolmogorov decomposition is unique. In the previous theorem if $\m C$ is a von Neumann algebra acting on a Hilbert space $\m H$, 
then we obtain a von Neumann $\m B$-module $\m F'$ by taking the strong operator topology closure of
$\m F$ in $\ma B(\m H,\m F\bigotimes\m  H)$. Define a map $U': \m F'\to \m F'$ by
\[
 U'(f):=\mbox{sot-}\displaystyle\lim_{\alpha} U(f_{\alpha})~\mbox{where
 $f$=sot-$\displaystyle\lim_{\alpha} f_{\alpha}\in \m F'$ with $f_{\alpha}\in \m F'$.}
\]
It is easy to check that $U'$ is a unitary. 
Let $\displaystyle\lim_{\alpha} f_{\alpha}\in \m F'$ where $f_{\alpha}\in \m F$. It is also immediate that 
for all $b\in \m B$, the limit $\mbox{sot-}\displaystyle\lim_{\alpha} \pi(b)f_{\alpha}$ exists.
In the following manner we can extend the $U$-representation
$\pi':\m B\to \ma B^a(\m F)$ to a representation, which we also denote by $\pi'$, of $\m B$ on
$\m F'$:
\[
 \pi'(b)(f):=\mbox{sot-}\displaystyle\lim_{\alpha} \pi(b)f_{\alpha}~\mbox{where $b\in \m B$,
 $f$=sot-$\displaystyle\lim_{\alpha} f_{\alpha}\in \m F'$ with $f_{\alpha}\in \m F$.}
\]
Fix $b\in \m B$. For every $f$=sot-$\displaystyle\lim_{\alpha} f_{\alpha}$ and
$e$=sot-$\displaystyle\lim_{\beta} e_{\beta}\in \m F'$ with
$f_{\alpha},e_{\beta}\in \m F$ we obtain that
\begin{align*}
 \langle \pi'(b^*) f,e\rangle &=\mbox{sot-}\displaystyle\lim_{\beta}\langle \pi'(b^*) f,e_{\beta}\rangle
 =\mbox{sot-}\displaystyle\lim_{\beta}(\mbox{sot-}\displaystyle\lim_{\alpha}\langle e_{\beta},\pi(b^*) f_{\alpha}\rangle)^*
 \\ &=\mbox{sot-}\displaystyle\lim_{\beta}(\mbox{sot-}\displaystyle\lim_{\alpha}\langle e_{\beta},U^* \pi(b)^*Uf_{\alpha}\rangle)^*
 = \langle  f,U^{\prime*}\pi'(b)^*U'e\rangle,
\end{align*}
therefore $\pi'$ is a $U'$-representation, and moreover $(\m F',\m C, U')$ is an S-module. Thus in this case we obtain the Kolmogorov decomposition 
$(\m F',U',\mf i)$ of the $\alpha$-CPD-kernel $\mf K$,
and now onwards we denote it by $(\m F,U,\mf i)$. If we assume $\m B$ to be a von Neumann algebra acting on $\m H$, 
then this S-module is in fact a von Neumann $\m C$-module where, in the minimality condition, the closure is taken under the strong operator topology.  

\section{Bures distance between $\alpha$-CPD-kernels}\label{sec2}
 
Assume $\mf K_1$ and $\mf K_2$ to be elements of $\m K^{\alpha}_{\Omega}(\m B,\m C)$ for some set $\Omega$ and unital $C^*$-algebras $\m B$ and $\m C$.
The Kolmogorov decompositions of $\mf K_m$ obtained using Theorem \ref{thm1} be $(\widehat{\m F_m},U_m,\widehat{\mf i_m})$ and suppose $\pi_m$ is the associated left actions of $\m B$  where $m=1,2$. 
Consider $\m F=\widehat{\m F_1}\oplus \widehat{\m F_2}$, $\pi=\pi_1\oplus \pi_2$, $\mf i_1=\widehat{\mf i_1}\oplus 0$, $\mf i_2=0\oplus \widehat{\mf i_2}$ and $U=U_1\oplus U_2$.
Then $(\m F,\mf i_m,U)$ satisfies  
\begin{align}\label{eqn2}  \mf{K}^{\sigma,\sigma'}_m(b)= \langle \mf{i}_m(\sigma), b
\mf{i}_m(\sigma')\rangle=\langle \alpha(b^*)\mf{i}_m(\sigma), 
\mf{i}_m(\sigma')\rangle~\mbox{for}~\sigma, \sigma'\in \Omega;~b\in \m B~\mbox{and}~m=1,2.
 \end{align} 
A Hilbert $\m C$-module $\m F$ is called a 
 {\it common S-correspondence} for 
 $\mf K_1$ and $\mf K_2$ if there exists a unitary $U$ on $\m F$, an $U$-representation $\pi:\m B\to \ma B^a(\m F)$ and maps $\mf i_m:\Omega\to\m F$ such that Equation \ref{eqn2} is satisfied.
\begin{definition}
 Let $\m B$ and $\m C$ be unital $C^*$-algebras, $\Omega$ be a set and $\mf K_1,\mf K_2\in\m K^{\alpha}_{\Omega}(\m B,\m C)$. 
 Let a Hilbert $\m C$-module $\m F$ be a common S-correspondence for 
 $\mf K_1$ and $\mf K_2$. For every $m=1,2$ let $\ma S(\m F,\mf K_m)$ denote the set of all functions $\mf i_m:\Omega\to \m F$ such that 
$$  \mf{K}^{\sigma,\sigma'}_m(b)= \langle \mf{i}_m(\sigma), b
\mf{i}_m(\sigma')\rangle=\langle \alpha(b^*)\mf{i}_m(\sigma), 
\mf{i}_m(\sigma')\rangle~\mbox{where}~\sigma, \sigma'\in \Omega~\mbox{and}~b\in \m B.
 $$
 Define $$\beta_{\m F}(\mf K_1,\mf K_2):=\mbox{inf}~\{\|\mf i_1(\sigma)-\mf i_2(\sigma)\|:
 \mf i_m\in\ma S(\m F,\mf K_m)~\mbox{for}~m=1,2~\mbox{and}~\sigma\in\Omega\},$$
 and the {\rm Bures distance} between $\mf K_1$ and $\mf K_2$ by
 $$\beta(\mf K_1,\mf K_2):=\mbox{inf}_{\m F}~\beta_{\m F}(\mf K_1,\mf K_2)$$
 where the infimum is over all common S-correspondence $\m F$ for $\mf K_1$ and $\mf K_2$.
\end{definition}
 When $\m C$ is a von Neumann algebra, the Bures distance is determined by the same formula except that the infimum is now taken over a smaller set as seen below:
 
 \begin{lemma}
   Assume $\m B$ to be a unital $C^*$-algebra and $\m C$ to be a von Neumann algebra acting on a Hilbert space $\m H$. 
   Let $\Omega$ be a set and $\mf K_1,\mf K_2\in\m K^{\alpha}_{\Omega}(\m B,\m C)$. 
 Then
 $$\beta(\mf K_1,\mf K_2)=\mbox{inf}_{F}~\beta_{F}(\mf K_1,\mf K_2)$$
 where the infimum is over all common S-correspondences $F$ for $\mf K_1$ and $\mf K_2$ such that $ F$ is also a
 von Neumann $\m C$-module.
 \end{lemma}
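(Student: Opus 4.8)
The plan is to establish the two inequalities separately. The inequality $\beta(\mf K_1,\mf K_2)\le \inf_{F}\beta_{F}(\mf K_1,\mf K_2)$, with the infimum on the right taken over all common S-correspondences $F$ that are moreover von Neumann $\m C$-modules, is immediate: such $F$ form a subfamily of the family of all common S-correspondences for $\mf K_1$ and $\mf K_2$ over which the infimum defining $\beta(\mf K_1,\mf K_2)$ is taken, so the infimum over the smaller family can only be larger or equal.

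For the reverse inequality, I would start from an arbitrary common S-correspondence $\m F$ for $\mf K_1$ and $\mf K_2$, with its unitary $U$, its $U$-representation $\pi:\m B\to\ma B^a(\m F)$, and maps $\mf i_m\in\ma S(\m F,\mf K_m)$ for $m=1,2$. Since $\m C$ acts on $\m H$, the interior tensor product $\m F\bigotimes\m H$ is a Hilbert space and $\m F$ sits as a concrete submodule of $\ma B(\m H,\m F\bigotimes\m H)$; let $F$ be its strong operator closure there. Exactly as in the paragraph following Theorem \ref{thm1}, $U$ extends to a unitary $U'$ on $F$, the representation $\pi$ extends to a $U'$-representation $\pi':\m B\to\ma B^a(F)$, and $(F,\m C,U')$ is an S-module; no minimality of $\m F$ was used in that argument. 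Since $\mf i_m(\sigma)\in\m F$ for every $\sigma$, and since $\pi'$, $U'$ and the $\m C$-valued inner product of $F$ restrict on $\m F$ to $\pi$, $U$ and the inner product of $\m F$, Equation \ref{eqn2} continues to hold with $\pi'$ in place of $\pi$. Hence $F$ is a common S-correspondence for $\mf K_1$ and $\mf K_2$ which is also a von Neumann $\m C$-module, and $\ma S(\m F,\mf K_m)\subseteq\ma S(F,\mf K_m)$ for $m=1,2$.

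From the inclusions $\ma S(\m F,\mf K_m)\subseteq\ma S(F,\mf K_m)$ I would conclude $\beta_{F}(\mf K_1,\mf K_2)\le\beta_{\m F}(\mf K_1,\mf K_2)$, because the infimum defining $\beta_F$ runs over a larger collection of pairs $(\mf i_1,\mf i_2)$ (with the same $\Omega$) than the one defining $\beta_{\m F}$. Taking the infimum over all common S-correspondences $\m F$ then yields $\inf_{F}\beta_{F}(\mf K_1,\mf K_2)\le\beta(\mf K_1,\mf K_2)$, and combined with the first paragraph this gives the asserted equality.

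There is no serious obstacle here; the only thing requiring care is the assertion that the strong-closure construction from Theorem \ref{thm1} applies to an arbitrary common S-correspondence, not just to a minimal Kolmogorov decomposition, so that one must check that the extended left action $\pi'$ is adjointable on $F$ and is a $U'$-representation, which is precisely the computation reproduced after Theorem \ref{thm1}. The point that makes the smaller infimum attain the same value is that the embedding $\m F\hookrightarrow F$ is isometric, so the quantities $\|\mf i_1(\sigma)-\mf i_2(\sigma)\|$ are unchanged upon passing to $F$.
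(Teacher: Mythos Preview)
Your proposal is correct and follows essentially the same route as the paper: pass from an arbitrary common S-correspondence $\m F$ to its strong operator closure $F$ in $\ma B(\m H,\m F\bigotimes\m H)$, observe that $F$ is again a common S-correspondence (now a von Neumann $\m C$-module), and use $\ma S(\m F,\mf K_m)\subseteq\ma S(F,\mf K_m)$ to compare the two infima. The paper's proof is terser---it simply asserts that $F$ is a common S-correspondence because $\m F\subset F$---whereas you spell out that the unitary and the $U$-representation extend via the construction following Theorem~\ref{thm1}, which is a welcome clarification but not a different idea.
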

 \begin{proof}
 Since each von Neumann $\m C$-module is a Hilbert $\m C$-module,  the inequality 
 $\beta(\mf K_1,\mf K_2)\leq\mbox{inf}_{ F}~\beta_{ F}(\mf K_1,\mf K_2)$ holds, where the infimum is over all common S-correspondences $F$ 
 for $\mf K_1$ and $\mf K_2$ such that $ F$ is also a von Neumann $\m C$-module. Assume that
  a Hilbert $\m C$-module $\m F$ is a common S-correspondence for $\mf K_1$ and $\mf K_2$, then we obtain a
 von Neumann $\m C$-module $F=\overline{span}^s \m F ,$ i.e., 
   the strong operator topology closure of
$\m F$ in $\ma B(\m H,\m F\bigotimes\m  H)$. Since $\m F$ is a subset of $F$, it is also a common S-correspondence for $\mf K_1$ and $\mf K_2$. 
Hence 
$\mbox{inf}_{F}~\beta_{ F}(\mf K_1,\mf K_2)\leq \beta(\mf K_1,\mf K_2)$.
 \end{proof}
\begin{proposition}\label{prop1}
Let $\m B$ be a unital $C^*$-algebra and $\m C$ be a von Neumann algebra acting on a Hilbert space $\m H$. Suppose $\Omega$ is a set. Then there exists
a von Neumann $\m C$-module $\m F$ such that the following holds:
\begin{itemize}
 \item [(i)] $\beta(\mf K_1,\mf K_2)=\beta_{\m F}(\mf K_1,\mf K_2)$ when $\mf K_1,\mf K_2\in\m K^{\alpha}_{\Omega}(\m B,\m C)$;
  \item [(ii)] For each $\mf K_1\in\m K^{\alpha}_{\Omega}(\m B,\m C)$ we obtain an element $\mf i_1\in \ma S(\m F,\mf K_1)$ such that for every $\mf K_2\in\m K^{\alpha}_{\Omega}(\m B,\m C)$ we have
  $$\beta(\mf K_1,\mf K_2)=\mbox{inf}~\{\|\mf i_1(\sigma)-\mf i_2(\sigma)\|:\mf i_2\in \ma S(\m F,\mf K_2),~\sigma\in\Omega\}.$$
  \end{itemize}
\end{proposition}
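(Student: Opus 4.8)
The plan is to construct a single von Neumann $\m C$-module $\m F$ that serves as a common S-correspondence for \emph{all} pairs of $\alpha$-CPD-kernels over $\Omega$ from $\m B$ to $\m C$ simultaneously, and on which the infimum defining the Bures distance is attained. Note first that $\m K^{\alpha}_{\Omega}(\m B,\m C)$ is a set, being a subset of the set of all functions $\Omega\times\Omega\to\ma B(\m B,\m C)$. For every $\mf K\in\m K^{\alpha}_{\Omega}(\m B,\m C)$ fix, as in the discussion following Theorem \ref{thm1}, its (von Neumann) Kolmogorov decomposition $(\widehat{\m F_{\mf K}},U_{\mf K},\widehat{\mf i_{\mf K}})$ with associated $U_{\mf K}$-representation $\pi_{\mf K}$ of $\m B$. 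I would take $\m F$ to be the strong operator topology closure of $\bigoplus_{\mf K\in\m K^{\alpha}_{\Omega}(\m B,\m C)}\widehat{\m F_{\mf K}}$ inside $\ma B\big(\m H,(\bigoplus_{\mf K}\widehat{\m F_{\mf K}})\otimes\m H\big)$, with $U:=\bigoplus_{\mf K}U_{\mf K}$ and $\pi:=\bigoplus_{\mf K}\pi_{\mf K}$ extended to $\m F$ exactly as in that discussion, so that $(\m F,\m C,U)$ is an S-module, $\pi$ is a $U$-representation making $\m F$ an S-correspondence from $\m B$ to $\m C$, and $\m F$ is a von Neumann $\m C$-module. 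For each $\mf K$ let $\mf i^{\mf K}\colon\Omega\to\m F$ send $\sigma$ to $\widehat{\mf i_{\mf K}}(\sigma)$ placed in the $\mf K$-th summand; since the inclusion of a summand is an isometric morphism of S-correspondences, $\mf i^{\mf K}\in\ma S(\m F,\mf K)$, and so $\m F$ is a common S-correspondence for every pair $\mf K_1,\mf K_2$. Because $\beta(\mf K_1,\mf K_2)\le\beta_{\m F}(\mf K_1,\mf K_2)$ always (being an infimum over competitors including $\m F$), part (ii) implies part (i): once the $\mf i_1$ of (ii) is in hand, $\beta(\mf K_1,\mf K_2)\le\beta_{\m F}(\mf K_1,\mf K_2)\le\inf\{\|\mf i_1(\sigma)-\mf i_2(\sigma)\|:\mf i_2\in\ma S(\m F,\mf K_2),\ \sigma\in\Omega\}=\beta(\mf K_1,\mf K_2)$, forcing equality throughout. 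So I would concentrate on (ii).

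Fix $\mf K_1$ and set $\mf i_1:=\mf i^{\mf K_1}$. The inequality ``$\ge\beta(\mf K_1,\mf K_2)$'' in (ii) is immediate, since $\m F$ is one of the common S-correspondences over which $\beta(\mf K_1,\mf K_2)$ is an infimum. For the reverse, the guiding idea is that the relative position of the two kernels inside any common S-correspondence is controlled by a contractive intertwiner of their Kolmogorov modules, and that every such intertwiner can be reproduced inside $\m F$. So let $\varepsilon>0$ and, invoking the previous lemma, pick a common S-correspondence $\m F'$ that is a von Neumann $\m C$-module, with $\mf j_m'\in\ma S(\m F',\mf K_m)$ and $\sigma_0\in\Omega$ such that $\|\mf j_1'(\sigma_0)-\mf j_2'(\sigma_0)\|<\beta(\mf K_1,\mf K_2)+\varepsilon$. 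By the uniqueness of the minimal Kolmogorov decomposition recorded after Theorem \ref{thm1}, the sub-S-correspondence $\m E_m$ of $\m F'$ generated by $\mf j_m'$ is isomorphic, as an S-correspondence, to $(\widehat{\m F_{\mf K_m}},U_{\mf K_m},\widehat{\mf i_{\mf K_m}})$ via an isometry $V_m$ carrying $\widehat{\mf i_{\mf K_m}}(\sigma)$ to $\mf j_m'(\sigma)$; since $\m E_1$ is a von Neumann submodule it is complemented, with the projection $P_1$ onto it commuting with $\pi'$ and with $U'$. Compressing $P_1$ through $V_1$ and $V_2$ produces an adjointable contraction $S\colon\widehat{\m F_{\mf K_2}}\to\widehat{\m F_{\mf K_1}}$, $\|S\|\le1$, intertwining $\pi_{\mf K_2}$ with $\pi_{\mf K_1}$ and $U_{\mf K_2}$ with $U_{\mf K_1}$, and satisfying
\[\langle\mf j_1'(\sigma),\pi'(b)\mf j_2'(\sigma')\rangle=\langle\widehat{\mf i_{\mf K_1}}(\sigma),\pi_{\mf K_1}(b)\,S\,\widehat{\mf i_{\mf K_2}}(\sigma')\rangle\qquad(b\in\m B;\ \sigma,\sigma'\in\Omega).\]

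Now I would reproduce $S$ inside $\m F$. Define $\mf i_2\colon\Omega\to\m F$ by letting $\mf i_2(\sigma)$ have $\mf K_1$-component $S\,\widehat{\mf i_{\mf K_2}}(\sigma)$ and $\mf K_2$-component $(1-S^{*}S)^{1/2}\widehat{\mf i_{\mf K_2}}(\sigma)$ and all other components $0$ (the two summands are distinct when $\mf K_1\ne\mf K_2$; when $\mf K_1=\mf K_2$ one just takes $S=\mathrm{id}$, whence $\mf i_2=\mf i_1$ and the relevant quantity is $0=\beta(\mf K_1,\mf K_1)$). Since $S$, $S^{*}$ and hence $(1-S^{*}S)^{1/2}$ commute with $\pi_{\mf K_2}(\cdot)$ and with $U_{\mf K_2}$, and $\widehat{\mf i_{\mf K_2}}\in\ma S(\widehat{\m F_{\mf K_2}},\mf K_2)$, expanding over the orthogonal summands gives $\langle\mf i_2(\sigma),\pi(b)\mf i_2(\sigma')\rangle=\langle\widehat{\mf i_{\mf K_2}}(\sigma),\pi_{\mf K_2}(b)\big(S^{*}S+(1-S^{*}S)\big)\widehat{\mf i_{\mf K_2}}(\sigma')\rangle=\mf K_2^{\sigma,\sigma'}(b)$, and likewise the second identity defining $\ma S$, so that $\mf i_2\in\ma S(\m F,\mf K_2)$. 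Expanding $\langle\mf i_1(\sigma_0)-\mf i_2(\sigma_0),\mf i_1(\sigma_0)-\mf i_2(\sigma_0)\rangle$ over the orthogonal summands (using $\widehat{\mf i_{\mf K_m}}\in\ma S(\widehat{\m F_{\mf K_m}},\mf K_m)$) yields $\mf K_1^{\sigma_0,\sigma_0}(1_{\m B})+\mf K_2^{\sigma_0,\sigma_0}(1_{\m B})-\langle\widehat{\mf i_{\mf K_1}}(\sigma_0),S\widehat{\mf i_{\mf K_2}}(\sigma_0)\rangle-\langle\widehat{\mf i_{\mf K_1}}(\sigma_0),S\widehat{\mf i_{\mf K_2}}(\sigma_0)\rangle^{*}$, which by the displayed identity (with $b=1_{\m B}$ and $\sigma=\sigma'=\sigma_0$) is exactly $\langle\mf j_1'(\sigma_0)-\mf j_2'(\sigma_0),\mf j_1'(\sigma_0)-\mf j_2'(\sigma_0)\rangle$. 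Hence $\|\mf i_1(\sigma_0)-\mf i_2(\sigma_0)\|=\|\mf j_1'(\sigma_0)-\mf j_2'(\sigma_0)\|<\beta(\mf K_1,\mf K_2)+\varepsilon$, and since $\varepsilon$ was arbitrary, $\inf\{\|\mf i_1(\sigma)-\mf i_2(\sigma)\|:\mf i_2\in\ma S(\m F,\mf K_2),\ \sigma\in\Omega\}\le\beta(\mf K_1,\mf K_2)$. This proves (ii), hence (i).

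The step I expect to be the main obstacle is the one in the middle paragraph: verifying that the sub-$\m C$-module of $\m F'$ generated by $\mf j_m'$ (under the left $\m B$-action, the right $\m C$-action, and $U'$) is genuinely a sub-S-correspondence carrying a \emph{minimal} Kolmogorov decomposition of $\mf K_m$ --- so that it is $U'$- and $\pi'$-reducing, hence complemented with $P_1$ commuting with both $\pi'$ and $U'$, and $S$ is well defined and contractive. This is where self-duality of von Neumann modules, the uniqueness part of Theorem \ref{thm1}, and careful bookkeeping of the $\alpha$-equivariance built into membership of $\ma S(\m F',\mf K_m)$ (in particular that $U'\mf j_m'(\sigma)=\mf j_m'(\sigma)$) all come in. The remaining verifications --- that the tilted $\mf i_2$ lands in $\ma S(\m F,\mf K_2)$ and that the two inner-product expansions agree --- are routine computations with orthogonal direct summands and the continuous functional calculus applied to $S^{*}S$.
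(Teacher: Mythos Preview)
Your argument is correct and is close in spirit to the paper's, but the packaging is different in one notable way. The paper builds the universal module as $\m F=\m F_0\oplus\m F_0$, where $\m F_0$ is (like yours) the strong closure of $\bigoplus_{\mf K}\widehat{\m F_{\mf K}}$; the second copy is used as follows. Given $\mf i_m\in\ma S(\m F',\mf K_m)$, the paper projects $\mf i_2$ onto the cyclic sub-S-correspondence generated by $\mf i_1$ and writes $\mf K_2=\mf L+\mf M$ accordingly; the $\mf L$-part is transported into the first copy of $\m F_0$ via a bilinear unitary $V$ identifying the two minimal Kolmogorov modules for $\mf K_1$, while the $\mf M$-part is realised by choosing \emph{any} $\widetilde{\mf i_2}\in\ma S(\m F_0,\mf M)$ in the second copy. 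A direct computation then shows the norm is preserved.

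You instead keep $\m F=\m F_0$ (a single copy) and replace the projection-plus-second-copy device with an intertwiner-plus-defect-operator device: you extract the contraction $S=V_1^{*}P_1V_2\in\ma B^{a,bil}(\widehat{\m F_{\mf K_2}},\widehat{\m F_{\mf K_1}})$ and set $\mf i_2(\sigma)=S\widehat{\mf i_{\mf K_2}}(\sigma)\oplus(1-S^{*}S)^{1/2}\widehat{\mf i_{\mf K_2}}(\sigma)$ in the two orthogonal summands $\widehat{\m F_{\mf K_1}}\oplus\widehat{\m F_{\mf K_2}}\subset\m F$. This is exactly the construction the paper postpones to Proposition~\ref{prop2}(ii) (the inclusion $M(\mf K_1,\mf K_2)\subset N_{\m F_1\oplus\m F_2}(\mf K_1,\mf K_2)$) and Corollary~\ref{cor1}; you have in effect merged that later argument into the proof of Proposition~\ref{prop1}. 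The payoff is a leaner $\m F$ and no need to verify that the auxiliary kernel $\mf M$ is again an $\alpha$-CPD-kernel; the cost is that you must check $S$ (hence $S^{*}S$ and its square root) is bilinear, which you do. Your worry in the final paragraph about $U'$-equivariance is unnecessary: membership in $\ma S(\m F,\mf K_2)$ only requires the two inner-product identities, and both follow once $S$ intertwines the left $\m B$-actions, which your $S=V_1^{*}P_1V_2$ does.
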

\begin{proof}
 Let $(\m F_{\mf K},U_{\mf K},\mf i_{\mf K})$ be the Kolmogorov decomposition for $\mf K\in\m K^{\alpha}_{\Omega}(\m B,\m C)$. Let 
 $\m H'=\bigoplus_{\mf K} \m H_{\mf K}$ where 
 $\m H_{\mf K}$ is the interior tensor product of $\m F_{\mf K}$ and $\m H$. Define a von Neumann $\m C$-module $\m F_0$ to be
 the strong operator topology closure of 
 $\bigoplus_{\mf K} \m F_{\mf K}$ in $\ma B(\m H,\m H')$. For each $\mf K$, since $\m F_{\mf K}$ is a subset of $\m F_0$, the set $\ma S(\m F_0, \mf K)$ is nonempty. Define the von Neumann $\m C$-module 
 $\m F$ to be $\m F_0\bigoplus \m F_0$.

$(i)$: Given a common S-correspondence $\m F'$ of $\mf K_1,\mf K_2\in\m K^{\alpha}_{\Omega}(\m B,\m C),$ which also is a von Neumann $\m C$-module, 
we show that 
$\beta_{\m F}(\mf K_1,\mf K_2)\leq\beta_{\m F'}(\mf K_1,\mf K_2)$.
 In fact, this follows if we prove that for each $\mf i_m\in \ma S(\m F',\mf K_m)$ there exist $\widehat{\mf j_m}\in \ma S(\m F,\mf K_m)$ 
 for $m=1,2$ that satisfy $\|\widehat{\mf j_1}(\sigma)-\widehat{\mf j_2}(\sigma)\|\leq \|\mf i_1(\sigma)-\mf i_2(\sigma)\|$ for all $\sigma\in\Omega$. 
 Let $\mf i^{\prime\prime}_1\in \ma S(\m F_0,\mf K_1)$.
 Define a 
bilinear unitary $V:{\overline{span}}^s \m B\mf i^{\prime\prime}_1(\Omega)\m C\to\overline{span}^s \m B\mf i_1(\Omega)\m C$ by
$V(b\mf i^{\prime\prime}_1(\sigma)c):=b\mf i_1(\sigma)c$ for $b\in\m B,~c\in\m C$ and $\sigma\in\Omega$. Assume $P$ to be the bilinear projection
of $\m F'$ onto $\overline{span}^s \m B\mf i_1(\Omega)\m C$. Denote $P(\mf i_2(\sigma))\in \overline{span}^s \m B\mf i_1(\Omega)\m C\subset \m F'$ 
by $\mf j_2(\sigma)$ and denote $(1-P)(\mf i_2(\sigma))\in (\overline{span}^s \m B\mf i_1(\Omega)\m C)^\perp\subset \m F'$ 
by $\mf j^{\prime}_2(\sigma)$ for all $\sigma\in \Omega$. For each $\sigma,\sigma'\in\Omega$ define 
$\mf L^{\sigma,\sigma'}(b):=\langle \mf j_2(\sigma), b\mf j_2(\sigma')\rangle$ and define 
$\mf M^{\sigma,\sigma'}(b):=\langle \mf j^{\prime}_2(\sigma), b\mf j^{\prime}_2(\sigma')\rangle$. It follows that 
$\mf K^{\sigma,\sigma'}_2=\mf L^{\sigma,\sigma'}+\mf M^{\sigma,\sigma'}$  for all $\sigma,\sigma'\in\Omega$. Set 
$\widehat{\mf i_2}(\sigma)=V^*(\mf j_2(\sigma))$ for $\sigma\in\Omega$ which is an element of 
${\overline{span}}^s \m B\mf i^{\prime\prime}_1(\Omega)\m C\subset\m F_0$. Therefore for each $\sigma,\sigma'\in\Omega$ and $b\in\m B$ we have
\begin{align*}\langle\widehat{\mf i_2}(\sigma), b\widehat{\mf i_2}(\sigma')\rangle
&=\langle V^*(\mf j_2(\sigma)), bV^*(\mf j_2(\sigma'))\rangle=\langle V^*(\mf j_2(\sigma)), V^*(b\mf j_2(\sigma'))\rangle
\\&=\langle\mf j_2(\sigma), b\mf j_2(\sigma')\rangle=\mf L^{\sigma,\sigma'}(b). 
\end{align*}
Select any element ${\widetilde{\mf i_2}}\in\ma S(\m F_0,\mf M),$ and define maps $\widehat{\mf j_1}$ and $\widehat{\mf j_2}$ by $\widehat{\mf j_1}(\sigma):=\mf i''_1(\sigma)\oplus 0$ and $\widehat{\mf j_2}(\sigma):=\widehat{\mf i_2}(\sigma)\oplus {\widetilde{\mf i_2}}(\sigma)$ for each $\sigma\in\Omega$. Thus $\widehat{\mf j_m}\in\ma S(\m F,\mf K_m)$ for $m=1,2$ such that
\begin{align*}
\|\widehat{\mf j_1}(\sigma)-\widehat{\mf j_2}(\sigma)\|^2
&=\|\langle\widehat{\mf j_1}(\sigma),\widehat{\mf j_1}(\sigma)\rangle+\langle\widehat{\mf j_2}(\sigma),\widehat{\mf j_2}(\sigma)\rangle-2\mbox{Re}(\langle\widehat{\mf j_1}(\sigma),\widehat{\mf j_2}(\sigma)\rangle)\|
\\&=\|\langle\mf i^{\prime\prime}_1(\sigma),\mf i^{\prime\prime}_1(\sigma)\rangle+\langle\widehat{\mf i_2}(\sigma),\widehat{\mf i_2}(\sigma)\rangle
+\langle{\widetilde{\mf i_2}}(\sigma),{\widetilde{\mf i_2}}(\sigma)\rangle-2\mbox{Re}(\langle\mf i^{\prime\prime}_1(\sigma),\widehat{\mf i_2}(\sigma)\rangle)\|
\\&=\|\langle\mf i^{\prime\prime}_1(\sigma)-\widehat{\mf i_2}(\sigma),\mf i^{\prime\prime}_1(\sigma)-\widehat{\mf i_2}(\sigma)\rangle
+\langle{\widetilde{\mf i_2}}(\sigma),{\widetilde{\mf i_2}}(\sigma)\rangle\|
\\&=\|\langle V(\mf i^{\prime\prime}_1(\sigma)-\widehat{\mf i_2}(\sigma)),V(\mf i^{\prime\prime}_1(\sigma)-\widehat{\mf i_2}(\sigma))\rangle
+\langle{\widetilde{\mf i_2}}(\sigma),{\widetilde{\mf i_2}}(\sigma)\rangle\|
\\&=\|\langle \mf i_1(\sigma)-\mf j_2(\sigma),\mf i_1(\sigma)-\mf j_2(\sigma)\rangle
+\langle\mf j^{\prime}_2(\sigma),\mf j^{\prime}_2(\sigma)\rangle\|
\\&=\|\langle \mf i_1(\sigma), \mf i_1(\sigma)\rangle +\langle\mf j_2(\sigma),\mf j_2(\sigma)\rangle-2\mbox{Re}(\langle\mf i_1(\sigma),\mf j_2(\sigma)\rangle)
+\langle\mf j^{\prime}_2(\sigma),\mf j^{\prime}_2(\sigma)\rangle\|
\\&=\|\langle \mf i_1(\sigma), \mf i_1(\sigma)\rangle +\langle\mf i_2(\sigma),P\mf i_2(\sigma)\rangle-2\mbox{Re}(\langle\mf i_1(\sigma),\mf j_2(\sigma)\rangle)
\\&~~~~~\hspace{0.2in}+\langle\mf i_2(\sigma),(1-P)(\mf i_2(\sigma))\rangle\|
\\&=\|\langle \mf i_1(\sigma), \mf i_1(\sigma)\rangle +\langle\mf i_2(\sigma),\mf i_2(\sigma)\rangle-2\mbox{Re}(\langle\mf i_1(\sigma),\mf j_2(\sigma)\rangle)\|
\\&=\|\langle \mf i_1(\sigma), \mf i_1(\sigma)\rangle +\langle\mf i_2(\sigma),\mf i_2(\sigma)\rangle-2\mbox{Re}(\langle\mf i_1(\sigma),\mf i_2(\sigma)\rangle)\|
\\& ~~~~~~~~~~~~~~~~~~\hspace{0.2in}(\mbox{because}~\mf i_2(\sigma)=\mf j_2(\sigma)\oplus\mf j^{\prime}_2(\sigma)~\mbox{and}~\mf i_1(\sigma)\perp \mf j^{\prime}_2(\sigma))
\\& =\|\langle \mf i_1(\sigma)-\mf i_2(\sigma),  \mf i_1(\sigma)-\mf i_2(\sigma)\rangle\|
\\&=\|\mf i_1(\sigma)-\mf i_2(\sigma)\|^2.
\end{align*}
This is true for all $\mf i_m\in \ma S(\m F',\mf K_m)$ where $m=1,2$; and hence $\beta_{\m F}(\mf K_1,\mf K_2)\leq\beta_{\m F'}(\mf K_1,\mf K_2)$.

$(ii)$: Observe that in part $(i)$ of this proof the choice of $\widehat{\mf j_1}\in\ma S(\m F,\mf K_1)$ does not depend on $\mf K_2$ 
and $\m F'$. On the other hand the choice of $\widehat{\mf j_2}$ depends on $\mf i_1$ and $\mf i_2$ and hence we denote $\widehat{\mf j_2}$ by $\widehat{\mf j_2}(\mf i_1,\mf i_2)$. Then
\begin{align*}
\beta_{\m F}(\mf K_1,\mf K_2)
&=\mbox{inf}~\{\|\mf i(\sigma)-\mf j(\sigma)\|:\mf i\in\ma S(\m F,\mf K_1),~\mf j\in\ma S(\m F,\mf K_2),~\sigma\in\Omega\}
\\&\leq\mbox{inf}~\{\|\widehat{\mf j_1}(\sigma)-\mf j(\sigma)\|:\mf j\in\ma S(\m F,\mf K_2),~\sigma\in\Omega\}
\\&\leq\mbox{inf}~\{\|\widehat{\mf j_1}(\sigma)-\widehat{\mf j_2}(\mf i_1,\mf i_2)(\sigma)\|:\mf i_m\in\ma S(\m F',\mf K_m),~\sigma\in\Omega,~m=1,2\}
\\&=\mbox{inf}~\{\|\mf i_1(\sigma)-\mf i_2(\sigma)\|:\mf i_m\in\ma S(\m F',\mf K_m),~\sigma\in\Omega,~m=1,2\}
\\&=\beta_{\m F'}(\mf K_1,\mf K_2).
\end{align*}
Since this holds for arbitrary common S-correspondence $\m F'$, we have 
\begin{align*}\beta(\mf K_1,\mf K_2)
&\leq\beta_{\m F}(\mf K_1,\mf K_2) \leq\mbox{inf}~\{\|\widehat{\mf j_1}(\sigma)-\mf j(\sigma)\|:\mf j\in\ma S(\m F,\mf K_2),~\sigma\in\Omega\}
\\ &\leq \beta(\mf K_1,\mf K_2).\qedhere
\end{align*}
\end{proof}

\begin{theorem}
Let $\m B$ be a unital $C^*$-algebra and $\m C$ be a von Neumann algebra acting on a Hilbert space $\m H$. Then the function $\beta$ is a metric on $\m K^{\alpha}_{\Omega}(\m B,\m C)$. 
\end{theorem}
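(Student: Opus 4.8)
The plan is to verify the metric axioms one at a time. Symmetry is built into $\beta_{\m F}$ since $\|\mf i_1(\sigma)-\mf i_2(\sigma)\|=\|\mf i_2(\sigma)-\mf i_1(\sigma)\|$, nonnegativity is immediate ($\beta$ is an infimum of norms), and finiteness is clear from the direct-sum common S-correspondence constructed just before Equation \ref{eqn2}, using $\|\mf i_m(\sigma)\|^2=\|\mf K_m^{\sigma,\sigma}(1_{\m B})\|$. One half of the separation axiom is also short: if $\mf K_1=\mf K_2$, then the minimal Kolmogorov decomposition $(\m F,U,\mf i)$ of this common kernel from Theorem \ref{thm1} is a common S-correspondence with $\mf i_1=\mf i_2=\mf i$, so $\beta_{\m F}(\mf K_1,\mf K_2)=0$. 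The substance therefore lies in (a) the triangle inequality and (b) the implication $\beta(\mf K_1,\mf K_2)=0\Rightarrow\mf K_1=\mf K_2$, and for both I would lean on Proposition \ref{prop1}.

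For (a), I would fix the universal von Neumann $\m C$-module $\m F$ of Proposition \ref{prop1}, so that $\beta=\beta_{\m F}$ on all of $\m K^{\alpha}_{\Omega}(\m B,\m C)$, and apply part (ii) to the middle kernel $\mf K_2$ to obtain a single $\mf i_2\in\ma S(\m F,\mf K_2)$ that computes $\beta(\mf K_2,\mf L)$ for every kernel $\mf L$; by symmetry it computes both $\beta(\mf K_1,\mf K_2)$ and $\beta(\mf K_2,\mf K_3)$. Then for arbitrary $\mf i_1\in\ma S(\m F,\mf K_1)$, $\mf i_3\in\ma S(\m F,\mf K_3)$ and $\sigma\in\Omega$, the triangle inequality for the norm of the Hilbert $\m C$-module $\m F$ gives
\[
\|\mf i_1(\sigma)-\mf i_3(\sigma)\|\le\|\mf i_1(\sigma)-\mf i_2(\sigma)\|+\|\mf i_2(\sigma)-\mf i_3(\sigma)\|.
\]
Applying the aggregation over $\sigma$ appearing in the definition of $\beta_{\m F}$ and then the infima over $\mf i_1$ and over $\mf i_3$ separately, the left-hand side is bounded below by $\beta_{\m F}(\mf K_1,\mf K_3)=\beta(\mf K_1,\mf K_3)$, while the right-hand side splits and, $\mf i_2$ being held fixed, the two infima decouple to give $\beta(\mf K_1,\mf K_2)+\beta(\mf K_2,\mf K_3)$. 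The one point needing care, and what I expect to be the main obstacle, is exactly this last step: it requires the aggregation over $\sigma$ to be subadditive on the pointwise estimate above (the aggregate of a sum at most the sum of the aggregates) so that the infima over the outer two kernels may be taken independently while the representative of the middle kernel stays fixed, and it is Proposition \ref{prop1}(ii) that makes a single fixed $\mf i_2$ available at all.

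For (b), suppose $\beta(\mf K_1,\mf K_2)=0$. With the same $\m F$ and, by Proposition \ref{prop1}(ii), a fixed $\mf i_1\in\ma S(\m F,\mf K_1)$, the vanishing of the infimum over $\mf i_2\in\ma S(\m F,\mf K_2)$ produces a net $(\mf i_2^{(\lambda)})$ in $\ma S(\m F,\mf K_2)$ with $\mf i_2^{(\lambda)}(\sigma)\to\mf i_1(\sigma)$ for every $\sigma\in\Omega$. Substituting $\mf i_2^{(\lambda)}$ in the defining identity of Equation \ref{eqn2} for $\mf K_2$ and letting $\lambda$ run, continuity of the $\m C$-valued inner product and boundedness of the $U$-representation of $\m B$ on $\m F$ give, for all $\sigma,\sigma'\in\Omega$ and $b\in\m B$,
\[
\mf K_2^{\sigma,\sigma'}(b)=\langle\mf i_2^{(\lambda)}(\sigma),b\,\mf i_2^{(\lambda)}(\sigma')\rangle\longrightarrow\langle\mf i_1(\sigma),b\,\mf i_1(\sigma')\rangle=\mf K_1^{\sigma,\sigma'}(b),
\]
and since the left-hand side does not depend on $\lambda$ we conclude $\mf K_1=\mf K_2$. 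Together with the axioms already checked this proves $\beta$ is a metric. If one preferred to avoid Proposition \ref{prop1} in step (a), the alternative is to take common S-correspondences $\m E$ for $(\mf K_1,\mf K_2)$ and $\m E'$ for $(\mf K_2,\mf K_3)$ and amalgamate them over the two minimal Kolmogorov subdecompositions of $\mf K_2$ sitting inside $\m E$ and $\m E'$, which are canonically isomorphic as S-correspondences by the uniqueness remark following Theorem \ref{thm1}; one then gets a common S-correspondence for $(\mf K_1,\mf K_3)$ in which the two representatives of $\mf K_2$ are identified, and the obstacle migrates to checking that the pasted unitary and left action are well defined, which is routine but should be carried out with some care.
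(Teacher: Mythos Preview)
Your proposal is correct and follows essentially the same route as the paper: both arguments invoke Proposition \ref{prop1} to pass to a single universal von Neumann $\m C$-module $\m F$, fix via part (ii) a representative of the middle kernel for the triangle inequality (and of one of the two kernels for separation), and then decouple the remaining infima using the norm triangle inequality in $\m F$. The only cosmetic difference is that the paper phrases the separation step as ``$\ma S(\m F,\mf K_2)$ is norm closed, hence contains the fixed $\widehat{\mf j_1}$,'' which is exactly your limit-through-the-inner-product argument stated more tersely.
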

 \begin{proof}
For each $\mf K_1,\mf K_2\in\m K^{\alpha}_{\Omega}(\m B,\m C)$, $\beta(\mf K_1,\mf K_2)\geq 0$ and
 $\beta(\mf K_1,\mf K_2)=\beta(\mf K_2,\mf K_1)$. Let $\m F$ and $\widehat{\mf j_1}\in\ma S(\m F,\mf K_1)$ be as in
the proof of Proposition \ref{prop1}$(ii)$. Thus if $\beta(\mf K_1,\mf K_2)= 0,$ 
then $\mbox{inf}~\{\|\widehat{\mf j_1}(\sigma)-\mf j(\sigma)\|:\mf j\in\ma S(\m F,\mf K_2),~\sigma\in\Omega\}=0.$ This yields 
$\widehat{\mf j_1}\in\ma S(\m F,\mf K_2),$ because
$\ma S(\m F,\mf K_2)$ is a norm closed subset of $\m F$. Thus $\mf K_1=\mf K_2$. Moreover, if $\mf K_3\in\m K^{\alpha}_{\Omega}(\m B,\m C)$, then
\begin{align*}\beta(\mf K_2,\mf K_3)
&=\mbox{inf}~\{\|\mf i_2(\sigma)-\mf i_3(\sigma)\|:\mf i_m\in\ma S(\m F,\mf K_m);~m=2,3;~\sigma\in\Omega\}
\\&\leq\mbox{inf}~\{\|\mf i_2(\sigma)-\mf i_1(\sigma)\|:\mf i_2\in\ma S(\m F,\mf K_2);~\sigma\in\Omega\}
\\&~~+\mbox{inf}~\{\|\mf i_1(\sigma)-\mf i_3(\sigma)\|:\mf i_3\in\ma S(\m F,\mf K_3);~\sigma\in\Omega\}
\\&=\beta(\mf K_2,\mf K_1)+\beta(\mf K_1,\mf K_3).
\end{align*}
\end{proof}

 \section{Intertwiners and Rigidity Theorem}

 Assume $\m F$ to be a Hilbert $C^*$-module over a von Neumann algebra $\m C$ acting on a Hilbert space $\m H$.  
 Suppose $\m B$ is a unital $C^*$-algebra and $\Omega$ is a set. Let 
 $\m F$ be a common S-correspondence for $\mf K_1,\mf K_2\in\m K^{\alpha}_{\Omega}(\m B,\m C)$, and $\mf i_m\in \ma S(\m F,\mf K_m)$ for $m=1,2$. Observe that
 \begin{align*}\|\mf i_1(\sigma)-\mf i_2(\sigma)\|^2&=\|\langle \mf i_1(\sigma)-\mf i_2(\sigma),\mf i_1(\sigma)-\mf i_2(\sigma)\rangle\|
 \\&=\|\mf K^{\sigma,\sigma}_1(1_{\m B})+\mf K^{\sigma,\sigma}_2(1_{\m B})-2\mbox{Re}(\langle\mf i_1(\sigma),\mf i_2(\sigma)\rangle)\|~\mbox{for all}~\sigma\in \Omega.
 \end{align*}
 Therefore the set $\{\langle\mf i_1(\sigma),\mf i_2(\sigma)\rangle:\mf i_m\in\ma S(\m F,\mf K_m)~\mbox{for}~m=1,2~\mbox{and}~\sigma\in\Omega\}$
 determines the Bures distance $\beta(\mf K_1,\mf K_2)$. We denote this set by $N_{\m F}(\mf K_1,\mf K_2)$. 
 Indeed, \begin{align}\label{eqn3}\nonumber&\beta_{\m F}(\mf K_1,\mf K_2)\\\nonumber=&\mbox{inf}~\{\|\mf i_1(\sigma)-\mf i_2(\sigma)\|:
 \mf i_m\in\ma S(\m F,\mf K_m)~\mbox{for}~m=1,2~\mbox{and}~\sigma\in\Omega\}
 \\=&\mbox{inf}~\{\|\mf K^{\sigma,\sigma}_1(1_{\m B})+\mf K^{\sigma,\sigma}_2(1_{\m B})-2\mbox{Re}(\langle\mf i_1(\sigma),\mf i_2(\sigma)\rangle)\|^{\frac{1}{2}}:
\langle\mf i_1(\sigma),\mf i_2(\sigma)\rangle\in N_{\m F}(\mf K_1,\mf K_2)\}. \end{align} 
Set $N(\mf K_1,\mf K_2):=\displaystyle\cup_{\m F} N_{\m F}(\mf K_1,\mf K_2)$ where $\m F$ is a common S-correspondence for 
 $\mf K_1$ and $\mf K_2$. Let $\ma B^{a,bil}(\m F_1,\m F_2)$ denote the set of all adjointable bilinear maps between S-correspondences
 $\m F_1$ and $\m F_2,$ and
 $$M(\mf K_1,\mf K_2):=\{\langle \mf i_1(\sigma),T\mf i_2(\sigma)\rangle:T\in \ma B^{a,bil}(\m F_2,\m F_1),~\|T\|\leq 1\}$$
 where $(\m F_m,U_m,\mf i_m)$ is a Kolmogorov decomposition for $\mf K_m$ whenever $m=1,2.$ Elements of $M(\mf K_1,\mf K_2)$ are intertwiners between two (minimal) Kolmogrorov decompositions of $\alpha$-CPD-kernels.
 \begin{proposition}\label{prop2}
Suppose $(\m F_m,U_m,\mf i_m)$ is a Kolmogorov decomposition for $\mf K_m,$ where $m=1,2.$ Then the following statements hold:
\begin{itemize}
 \item [(i)] The definition of $M(\mf K_1,\mf K_2)$ does not depend upon the choice of 
 the Kolmogorov decomposition $(\m F_m,U_m,\mf i_m)$ for $\mf K_m,$ where $m=1,2.$ 
 \item [(ii)] $M(\mf K_1,\mf K_2)=N(\mf K_1,\mf K_2)=N_{\m F_1\oplus \m F_2}(\mf K_1,\mf K_2)$.
 \item [(iii)] $\beta(\mf K_1,\mf K_2)=\mbox{inf}~\{\|\mf K^{\sigma,\sigma}_1(1_{\m B})+\mf K^{\sigma,\sigma}_2(1_{\m B})
 -2\mbox{Re}(m)\|^{\frac{1}{2}}:
m\in M(\mf K_1,\mf K_2)\}.$
\end{itemize}
 \end{proposition}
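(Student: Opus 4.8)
The plan is to obtain (i) directly from the uniqueness of the Kolmogorov decomposition, to prove the two equalities in (ii) by chasing three elementary inclusions through the common S-correspondence $\m F_1\oplus\m F_2$, and finally to read off (iii) from (ii) together with Equation \ref{eqn3}. For (i), suppose $(\m F_m,U_m,\mf i_m)$ and $(\m F'_m,U'_m,\mf i'_m)$ are two Kolmogorov decompositions of $\mf K_m$ for $m=1,2$. The uniqueness remark following Theorem \ref{thm1} supplies S-correspondence isomorphisms $W_m:\m F_m\to\m F'_m$ with $W_m\mf i_m(\sigma)=\mf i'_m(\sigma)$; in particular the $W_m$ are $\m C$-linear isometries intertwining the left $\m B$-actions and the symmetries. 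I would then check that $T\mapsto W_1TW_2^{-1}$ carries $\{T\in\ma B^{a,bil}(\m F_2,\m F_1):\|T\|\le 1\}$ bijectively onto the corresponding set built from the primed decompositions, and that $\langle\mf i'_1(\sigma),W_1TW_2^{-1}\mf i'_2(\sigma)\rangle=\langle\mf i_1(\sigma),T\mf i_2(\sigma)\rangle$ since $W_1$ is an isometry; hence $M(\mf K_1,\mf K_2)$ does not depend on the decompositions chosen.

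For (ii) I would establish three inclusions. First, $N_{\m F_1\oplus\m F_2}(\mf K_1,\mf K_2)\subseteq N(\mf K_1,\mf K_2)$ is immediate, because $\m F_1\oplus\m F_2$ with $U_1\oplus U_2$ and $\pi_1\oplus\pi_2$ is a common S-correspondence (as noted before Equation \ref{eqn2}). Second, to get $M(\mf K_1,\mf K_2)\subseteq N_{\m F_1\oplus\m F_2}(\mf K_1,\mf K_2)$, take $T\in\ma B^{a,bil}(\m F_2,\m F_1)$ with $\|T\|\le 1$, put $S:=\sqrt{1-T^{*}T}\in\ma B^{a}(\m F_2)$ (which commutes with the left $\m B$-action because $T^{*}T$ does, the latter using that $T$ is an S-correspondence morphism), and define $\mf j_1(\sigma):=\mf i_1(\sigma)\oplus 0$ and $\mf j_2(\sigma):=T\mf i_2(\sigma)\oplus S\mf i_2(\sigma)$ in $\m F_1\oplus\m F_2$. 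Using bilinearity of $T$ and $S$ and the identity $T^{*}T+S^{2}=1$ one verifies $\mf j_m\in\ma S(\m F_1\oplus\m F_2,\mf K_m)$, while $\langle\mf j_1(\sigma),\mf j_2(\sigma)\rangle=\langle\mf i_1(\sigma),T\mf i_2(\sigma)\rangle$, so this value lies in $N_{\m F_1\oplus\m F_2}(\mf K_1,\mf K_2)$.

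Third, for $N(\mf K_1,\mf K_2)\subseteq M(\mf K_1,\mf K_2)$, let $\m F$ be any common S-correspondence and $\mf j_m\in\ma S(\m F,\mf K_m)$; by minimality and uniqueness of the Kolmogorov decomposition the assignment $\mf i_m(\sigma)\mapsto\mf j_m(\sigma)$ extends to an isometry $V_m:\m F_m\to\m F$ of S-correspondences onto the sub-S-correspondence generated by $\mf j_m(\Omega)$ (after replacing $\m F$ by its strong-operator closure if necessary, which is harmless for computing $\beta$ by the Lemma preceding Proposition \ref{prop1}, so that this submodule is complemented and $V_m$ is adjointable). Then $T:=V_1^{*}V_2\in\ma B^{a,bil}(\m F_2,\m F_1)$ has $\|T\|\le 1$ and $\langle\mf j_1(\sigma),\mf j_2(\sigma)\rangle=\langle V_1\mf i_1(\sigma),V_2\mf i_2(\sigma)\rangle=\langle\mf i_1(\sigma),T\mf i_2(\sigma)\rangle\in M(\mf K_1,\mf K_2)$. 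Chaining $M\subseteq N_{\m F_1\oplus\m F_2}\subseteq N\subseteq M$ gives $M(\mf K_1,\mf K_2)=N_{\m F_1\oplus\m F_2}(\mf K_1,\mf K_2)=N(\mf K_1,\mf K_2)$. Part (iii) then follows: Equation \ref{eqn3} expresses $\beta_{\m F}(\mf K_1,\mf K_2)$ as the infimum of $\|\mf K^{\sigma,\sigma}_1(1_{\m B})+\mf K^{\sigma,\sigma}_2(1_{\m B})-2\mathrm{Re}(m)\|^{1/2}$ over $m\in N_{\m F}(\mf K_1,\mf K_2)$, and taking the infimum over all common S-correspondences $\m F$, using $\beta(\mf K_1,\mf K_2)=\inf_{\m F}\beta_{\m F}(\mf K_1,\mf K_2)$, $\bigcup_{\m F}N_{\m F}(\mf K_1,\mf K_2)=N(\mf K_1,\mf K_2)$ and $N(\mf K_1,\mf K_2)=M(\mf K_1,\mf K_2)$ yields exactly the claimed formula.

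The step I expect to be the real obstacle is the third inclusion $N(\mf K_1,\mf K_2)\subseteq M(\mf K_1,\mf K_2)$: one must be certain that inside an arbitrary common S-correspondence the sub-bimodule generated by $\mf j_m(\Omega)$ is genuinely a copy of the Kolmogorov module $\m F_m$ — that is, that it is invariant under the symmetry and hence inherits the full S-correspondence structure — and that it is orthogonally complemented, which is exactly where passing to strong-operator closures (legitimate by the Lemma before Proposition \ref{prop1}) is needed to make $V_m$ adjointable. The verifications in the first two inclusions of (ii), and all of (iii), are then routine bookkeeping with bilinear adjointable maps.
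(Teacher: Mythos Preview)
Your proposal is essentially correct and uses the same key ingredients as the paper (the $\sqrt{1-T^{*}T}$ trick for $M\subseteq N_{\m F_1\oplus\m F_2}$, and the isometry onto a complemented submodule), but you have redistributed the work between (i) and (ii).

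For (i) you invoke the uniqueness remark after Theorem~\ref{thm1}, which only gives an isomorphism $W_m:\m F_m\to\m F'_m$ when \emph{both} decompositions are minimal; the statement, however, covers arbitrary (not necessarily minimal) Kolmogorov decompositions. The paper handles this by fixing the minimal decomposition $(\m F'_m,U'_m,\mf i'_m)$ as a reference and, for any given $(\m F_m,U_m,\mf i_m)$, building a bilinear isometry $V_m:\m F'_m\to\overline{\mathrm{span}}^s\m B\mf i_m(\Omega)\m C\subset\m F_m$ onto a complemented submodule; then $T\mapsto V_1^*TV_2$ and $T'\mapsto V_1T'V_2^*$ give the bijection between the two candidate $M$-sets. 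Your argument is the special case where the image of $V_m$ is all of $\m F_m$.

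Conversely, for $N\subseteq M$ in (ii) you run exactly this complemented-isometry argument (your maps $V_m:\m F_m\to\m F$), whereas the paper gets $N\subseteq M$ in one line from (i): given a common S-correspondence $\m F$ and $\mf i_m\in\ma S(\m F,\mf K_m)$, take $\m F_1=\m F_2=\m F$ and $T=\mathrm{id}_{\m F}$; by (i) this computes the same $M$. So the paper front-loads the isometry/complementation work into (i) and then harvests it, while you postpone it to the third inclusion of (ii); the net technical content is the same. Part (iii) is identical in both approaches.
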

 \begin{proof}
Assume $(\m F^{\prime}_m,U^{\prime}_m,\mf i^{\prime}_m)$ to be the minimal Kolmogorov decomposition for $\mf K_m$, 
  for each $m=1,2.$ Define 
  $$M^{\prime}(\mf K_1,\mf K_2):=\{\langle \mf i^{\prime}_1(\sigma),T^{\prime}\mf i^{\prime}_2(\sigma)\rangle:
  T^{\prime}\in \ma B^{a,bil}(\m F^{\prime}_2,\m F^{\prime}_1),~\|T^{\prime}\|\leq 1\}.$$
  For each $m=1,2,$ we define a bilinear unitary $V_m$ from $\m F^{\prime}_m$ to $\overline{span}^s \m B\mf i_m(\Omega)\m C$ by 
  $V_m:b\mf i^{\prime}_m(\sigma)c\mapsto b\mf i_m(\sigma)c$ for all $b\in\m B,$ $c\in\m C$ and $\sigma\in \Omega$. 
  Each $V_m\in\ma B^{a,bil}(\m F^{\prime}_m,\m F_m)$ and satisfy $V^*_mV_m=id_{\m F^{\prime}_m}$, because the range 
  $\overline{span}^s \m B\mf i_m(\Omega)\m C$ of $V_m$ is a 
  complemented submodule of $\m F_m$ (cf. \cite[Theorem 3.6]{La95}). This implies that $V_m(\mf i^{\prime}_m(\sigma))=\mf i_m(\sigma)$
and $V^*_m(\mf i_m(\sigma))=\mf i^{\prime}_m(\sigma)$ for each $\sigma\in\Omega$ and $m=1,2.$ If 
$\langle \mf i_1(\sigma),T\mf i_2(\sigma)\rangle\in M(\mf K_1,\mf K_2)$ for some 
$T\in \ma B^{a,bil}(\m F_2,\m F_1)$ with $\|T\|\leq 1$, then $T':=V^*_1TV_2\in \ma B^{a,bil}(\m F^{\prime}_2,\m F^{\prime}_1)$ and 
$\|T'\|\leq 1.$ Further we get $\langle \mf i_1(\sigma),T\mf i_2(\sigma)\rangle\in M^{\prime}(\mf K_1,\mf K_2)$, since
$$ \langle \mf i_1(\sigma),T\mf i_2(\sigma)\rangle=\langle V_1\mf i^{\prime}_1(\sigma),TV_2\mf i^{\prime}_2(\sigma)\rangle
=\langle \mf i^{\prime}_1(\sigma),V^*_1TV_2\mf i^{\prime}_2(\sigma)\rangle=\langle \mf i^{\prime}_1(\sigma),T'\mf i^{\prime}_2(\sigma)\rangle.$$
A similar argument also yields the reverse inclusion $ M^{\prime}(\mf K_1,\mf K_2)\subset M(\mf K_1,\mf K_2)$. This proves statement $(i).$ 

$(ii):$ Let $\m F$ be a common S-correspondence for $\mf K_1$ and $\mf K_2$. Assume 
$\langle\mf i_1(\sigma),\mf i_2(\sigma)\rangle\in N_{\m F}(\mf K_1,\mf K_2),$ $\m F_1=\m F_2=\m F$ and $T=id_{\m F}$. Thus by $(i),$ we get
$$\langle\mf i_1(\sigma),\mf i_2(\sigma)\rangle=\langle\mf i_1(\sigma),T\mf i_2(\sigma)\rangle\in M(\mf K_1,\mf K_2).$$
This is true for all choices of $\langle\mf i_1(\sigma),\mf i_2(\sigma)\rangle\in N_{\m F}(\mf K_1,\mf K_2)$ and $\m F$, it follows that
$N(\mf K_1,\mf K_2)\subset M(\mf K_1,\mf K_2).$ On the other hand suppose that 
$\langle\mf i_1(\sigma),T\mf i_2(\sigma)\rangle\in M(\mf K_1,\mf K_2).$ Fix $\mf j_1(\sigma)=\mf i_1(\sigma)\oplus 0$ and
$\mf j_2(\sigma)=T\mf i_2(\sigma)\oplus \sqrt[]{id_{\m F_2}-T^*T}\mf i_2(\sigma)\in \m F_1\oplus\m F_2.$ Thus for each $b\in\m B$, we obtain 
$\langle \mf j_1(\sigma), b\mf j_1(\sigma^{\prime})\rangle=\langle \mf i_1(\sigma), b\mf i_1(\sigma^{\prime})\rangle=\mf K^{\sigma,\sigma^{\prime}}_1(b)$ and 
\begin{align*}
 \langle \mf j_2(\sigma), b\mf j_2(\sigma^{\prime})\rangle
 &=\left< T\mf i_2(\sigma)\oplus \sqrt[]{id_{\m F_2}-T^*T}\mf i_2(\sigma), 
 bT\mf i_2(\sigma^{\prime})\oplus b~\sqrt[]{id_{\m F_2}-T^*T}\mf i_2(\sigma^{\prime})\right>
  \\&=\left<  T\mf i_2(\sigma),Tb\mf i_2(\sigma^{\prime})\right>
  +\left<  \sqrt[]{id_{\m F_2}-T^*T}\mf i_2(\sigma), \sqrt[]{id_{\m F_2}-T^*T}b\mf i_2(\sigma^{\prime})\right>
  \\&=\left<  \mf i_2(\sigma),T^*Tb\mf i_2(\sigma^{\prime})\right>
  +\left<  \mf i_2(\sigma), (id_{\m F_2}-T^*T )b\mf i_2(\sigma^{\prime})\right>
  \\&=\left<  \mf i_2(\sigma),b\mf i_2(\sigma^{\prime})\right>=\mf K^{\sigma,\sigma^{\prime}}_2(b).
\end{align*}
Similarly $\mbox{for each}~b\in\m B$ we can prove that $\mf 
K^{\sigma,\sigma^{\prime}}_1(b)=\langle \alpha(b^*)\mf j_1(\sigma), \mf j_1(\sigma^{\prime})\rangle$ and $\mf K^{\sigma,\sigma^{\prime}}_2(b)=$\\$
 \langle \alpha(b^*)\mf j_2(\sigma), \mf j_2(\sigma^{\prime})\rangle.$
Therefore $(\m F_1\bigoplus\m F_2, \mf j_m)$ is a Kolmogorov construction for $\mf K_m$ for each $m=1,2.$ Observe that 
$\langle\mf i_1(\sigma),T\mf i_2(\sigma)\rangle=\langle\mf j_1(\sigma),\mf j_2(\sigma)\rangle\in N_{\m F_1\oplus \m F_2}(\mf K_1,\mf K_2).$
This proves $M(\mf K_1,\mf K_2)\subset N_{\m F_1\oplus \m F_2}(\mf K_1,\mf K_2)$, and hence 
$N(\mf K_1,\mf K_2)\subset M(\mf K_1,\mf K_2)\subset N_{\m F_1\oplus \m F_2}(\mf K_1,\mf K_2)\subset N(\mf K_1,\mf K_2).$

The statement $(iii)$ follows from Equation \ref{eqn3}.
\end{proof}
\begin{corollary}\label{cor1}
Suppose $\m B$ is a unital $C^*$-algebra and $\m C$ is a von Neumann algebra acting on a Hilbert space $\m H$. 
  Assume $\Omega$ to be a set and $(\m F_m,U_m,\mf i_m)$ to be a Kolmogorov decomposition for $\mf K_m\in\m K^{\alpha}_{\Omega}(\m B,\m C)$ 
  where $m=1,2.$ Then 
  \begin{align*}\beta(\mf K_1,\mf K_2)&=\beta_{\m F_1\bigoplus \m F_2}(\mf K_1,\mf K_2)
  \\&=\mbox{inf}~\{\|\mf i_1(\sigma)\oplus 0-\mf j_1(\sigma)\|:
  \mf j_1\in \ma S(\mbox{$\m F_1\bigoplus \m F_2,\mf K_2)$}\}.
  \end{align*}
\end{corollary}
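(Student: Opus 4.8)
The plan is to obtain both displayed equalities by assembling results already proved — Proposition~\ref{prop2} (especially parts (ii) and (iii)) and the formula for $\beta_{\m F}$ recorded in Equation~\ref{eqn3} — with no new module-theoretic construction; the content is just a careful chaining of the identifications $M(\mf K_1,\mf K_2)=N(\mf K_1,\mf K_2)=N_{\m F_1\oplus\m F_2}(\mf K_1,\mf K_2)$. For the first equality I would start from Proposition~\ref{prop2}(iii), namely $\beta(\mf K_1,\mf K_2)=\mbox{inf}\{\|\mf K^{\sigma,\sigma}_1(1_{\m B})+\mf K^{\sigma,\sigma}_2(1_{\m B})-2\mbox{Re}(m)\|^{\frac12}:m\in M(\mf K_1,\mf K_2)\}$, replace $M(\mf K_1,\mf K_2)$ by $N_{\m F_1\oplus\m F_2}(\mf K_1,\mf K_2)$ using Proposition~\ref{prop2}(ii), and recognise the result, through Equation~\ref{eqn3} applied to the single common S-correspondence $\m F_1\oplus\m F_2$, as precisely $\beta_{\m F_1\oplus\m F_2}(\mf K_1,\mf K_2)$. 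Hence $\beta(\mf K_1,\mf K_2)=\beta_{\m F_1\oplus\m F_2}(\mf K_1,\mf K_2)$.

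It remains to identify $\beta_{\m F_1\oplus\m F_2}(\mf K_1,\mf K_2)$ with $b:=\mbox{inf}\{\|\mf i_1(\sigma)\oplus 0-\mf j_1(\sigma)\|:\mf j_1\in\ma S(\m F_1\oplus\m F_2,\mf K_2),~\sigma\in\Omega\}$. One inequality is immediate: the map $\sigma\mapsto\mf i_1(\sigma)\oplus 0$ lies in $\ma S(\m F_1\oplus\m F_2,\mf K_1)$ (it reproduces $\mf K_1$ in both forms of Equation~\ref{eqn2}), so restricting the double infimum defining $\beta_{\m F_1\oplus\m F_2}(\mf K_1,\mf K_2)$ to this fixed first slot gives $\beta_{\m F_1\oplus\m F_2}(\mf K_1,\mf K_2)\le b$. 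For the reverse inequality I would reuse the construction in the proof of Proposition~\ref{prop2}(ii): for $m=\langle\mf i_1(\sigma),T\mf i_2(\sigma)\rangle\in M(\mf K_1,\mf K_2)$ with $T\in\ma B^{a,bil}(\m F_2,\m F_1)$, $\|T\|\le 1$, the map $\mf j_1(\tau):=T\mf i_2(\tau)\oplus\sqrt{id_{\m F_2}-T^*T}\,\mf i_2(\tau)$ belongs to $\ma S(\m F_1\oplus\m F_2,\mf K_2)$ — this is exactly the verification carried out there for the map called $\mf j_2$ — and expanding the Hilbert $\m C$-module norm of the direct sum, using $\langle T\mf i_2(\sigma),T\mf i_2(\sigma)\rangle+\langle\mf i_2(\sigma),(id_{\m F_2}-T^*T)\mf i_2(\sigma)\rangle=\langle\mf i_2(\sigma),\mf i_2(\sigma)\rangle$ and $\langle\mf i_m(\sigma),\mf i_m(\sigma)\rangle=\mf K^{\sigma,\sigma}_m(1_{\m B})$, gives
\[
\|\mf i_1(\sigma)\oplus 0-\mf j_1(\sigma)\|^2=\|\mf K^{\sigma,\sigma}_1(1_{\m B})+\mf K^{\sigma,\sigma}_2(1_{\m B})-2\mbox{Re}(m)\|.
\]
Since $(\mf j_1,\sigma)$ is an admissible pair in the infimum $b$, this yields $b\le\|\mf K^{\sigma,\sigma}_1(1_{\m B})+\mf K^{\sigma,\sigma}_2(1_{\m B})-2\mbox{Re}(m)\|^{\frac12}$ for every $m\in M(\mf K_1,\mf K_2)$; taking the infimum over $m$ and applying Proposition~\ref{prop2}(iii) gives $b\le\beta(\mf K_1,\mf K_2)$. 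Chaining the three inequalities, $\beta(\mf K_1,\mf K_2)=\beta_{\m F_1\oplus\m F_2}(\mf K_1,\mf K_2)\le b\le\beta(\mf K_1,\mf K_2)$, so all three quantities coincide, which is the assertion.

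The routine checks — that contractions $T\in\ma B^{a,bil}(\m F_2,\m F_1)$ realise exactly $M(\mf K_1,\mf K_2)$ independently of the chosen Kolmogorov decompositions (Proposition~\ref{prop2}(i)), that $\sqrt{id_{\m F_2}-T^*T}$ is again an adjointable bilinear operator, and that $\|x\oplus y\|^2=\|\langle x,x\rangle+\langle y,y\rangle\|$ on $\m F_1\oplus\m F_2$ — each need only a line. The only point that actually requires attention is the bookkeeping of the quantifier on $\sigma$: in the infimum $b$ the index $\sigma$ varies jointly with $\mf j_1$, and the construction attaches to each $m=\langle\mf i_1(\sigma),T\mf i_2(\sigma)\rangle$ a $\mf j_1$ matched to that very $\sigma$, so the two infima genuinely align. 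There is no analytic difficulty here; in contrast with the rigidity and metric arguments, this proof invokes no norm-closedness of $\ma S(\m F_1\oplus\m F_2,\mf K_2)$.
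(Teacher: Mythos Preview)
Your argument is correct and essentially follows the paper's own proof: both rely on the construction $\mf j_2(\sigma)=T\mf i_2(\sigma)\oplus\sqrt{id_{\m F_2}-T^*T}\,\mf i_2(\sigma)$ from the proof of Proposition~\ref{prop2}(ii) and the identity $\|\mf i_1(\sigma)\oplus 0-\mf j_2(\sigma)\|^2=\|\mf K^{\sigma,\sigma}_1(1_{\m B})+\mf K^{\sigma,\sigma}_2(1_{\m B})-2\mbox{Re}(m)\|$ to sandwich the one-slot infimum $b$ between $\beta_{\m F_1\oplus\m F_2}(\mf K_1,\mf K_2)$ and $\beta(\mf K_1,\mf K_2)$. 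The only organisational difference is that you first isolate the equality $\beta(\mf K_1,\mf K_2)=\beta_{\m F_1\oplus\m F_2}(\mf K_1,\mf K_2)$ directly from Proposition~\ref{prop2}(ii),(iii) and Equation~\ref{eqn3}, whereas the paper threads all three quantities through a single chain $\beta(\mf K_1,\mf K_2)\ge b\ge\beta_{\m F_1\oplus\m F_2}(\mf K_1,\mf K_2)$ together with the trivial $\beta\le\beta_{\m F_1\oplus\m F_2}$; the content is the same.
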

\begin{proof}
 If $\langle \mf i_1(\sigma),T\mf i_2(\sigma)\rangle\in M(\mf K_1,\mf K_2),$ then for each 
 $m=1,2,$ there exists $\mf j_m\in \ma S(\m F_1\bigoplus \m F_2,\mf K_m)$ 
  such that $\langle \mf i_1(\sigma),T\mf i_2(\sigma)\rangle
 =\langle \mf j_1(\sigma),\mf j_2(\sigma)\rangle$ and $\mf j_1(\sigma)=\mf i_1(\sigma)\oplus 0$ (cf. the proof of Proposition \ref{prop2}(ii)). 
From Proposition \ref{prop2}(iii) it follows that
\begin{align*}
 &~~~~~\beta(\mf K_1,\mf K_2)
 \\&=\mbox{inf}~\{\|\mf K^{\sigma,\sigma}_1(1_{\m B})+\mf K^{\sigma,\sigma}_2(1_{\m B})
 -2\mbox{Re}(\langle\mf i_1(\sigma)\oplus 0,\mf j_2(\sigma)\rangle)\|^{\frac{1}{2}}:
T\in \ma B^{a,bil}(\m F_2,\m F_1),~\|T\|\leq 1\}
\\&\geq \mbox{inf}~\{\|\mf K^{\sigma,\sigma}_1(1_{\m B})+\mf K^{\sigma,\sigma}_2(1_{\m B})
 -2\mbox{Re}(\langle\mf i_1(\sigma)\oplus 0,\mf j^{\prime}_1(\sigma)\rangle)\|^{\frac{1}{2}}:
 \mf j^{\prime}_1\in \ma S(\mbox{$\m F_1\bigoplus \m F_2,\mf K_2)$}\}
\\&= \mbox{inf}~\{\|\mf i_1(\sigma)\oplus 0-\mf j^{\prime}_1(\sigma)\|:
\mf j^{\prime}_1\in \ma S(\mbox{$\m F_1\bigoplus \m F_2,\mf K_2)$}\}
\\&\geq  \beta_{\m F_1\bigoplus \m F_2}(\mf K_1,\mf K_2).\qedhere
\end{align*}
\end{proof}

Next we obtain a technical proposition for CPD-kernels which defined over a set and are from a fixed von Neumann algebra to itself. This result would
be useful, in particular, to prove a rigidity theorem for CPD-kernels.

\begin{proposition}\label{prop3}
 
Let $\m B$ be a von Neumann algebra acting on a Hilbert space $\m H$. Let $\Omega$ be a set and $(\m F,\mf i)$ be the minimal 
Kolmogorov decomposition for the CPD-kernel $\mf K$ over $\Omega$ from $\m B$ to $\m B$. Then the following statements are equivalent:
\begin{itemize}
 \item [(i)] There exists a unit vector in the center $\ma C_{\m B} (\m F):=\{f\in\m F:bf=fb,~\mbox{for all}~b\in\m B\}$.
 \item [(ii)] The module $\m F$ is isomorphic to $\m B\bigoplus \m F'$ where $\m F'$ is a von Neumann $\m B$ submodule of $\m F$.
 \item [(iii)] There exists a  subset $\{b^\sigma:\sigma\in\Omega\}$ of $\m B$ such that the two sided strongly closed ideal generated by
 $\{b^\sigma:\sigma\in\Omega\}$ equals to $\m B$.
 Moreover, there exists a CPD-kernel $\mf L$ over $\Omega$ from $\m B$ to $\m B$ 
 such that
 \begin{equation}\label{eqn4}
  \mf K^{\sigma,\sigma'}(b)=b^{\sigma*}bb^{\sigma'}+\mf L^{\sigma,\sigma'}(b)~\mbox{for all}~b,b^\sigma,b^{\sigma'}\in\m B~\mbox{and}~
  \sigma,\sigma'\in\Omega.
 \end{equation}
 Indeed, $b\mapsto b^{\sigma*}bb^{\sigma'}$ is also a CPD-kernel from $\m B$ to $\m B$
\end{itemize}

\end{proposition}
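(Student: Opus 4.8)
The plan is to run the cycle $(i)\Rightarrow(ii)\Rightarrow(i)$, which is essentially a reformulation, then prove $(i)\Rightarrow(iii)$ directly from a central unit vector, and finally obtain $(iii)\Rightarrow(i)$, which is the substantive implication.

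\emph{$(i)\Leftrightarrow(ii)$.} Suppose $\xi\in\ma C_{\m B}(\m F)$ with $\langle\xi,\xi\rangle=1_{\m B}$. Since $\langle\xi a_1,\xi a_2\rangle=a_1^*\langle\xi,\xi\rangle a_2=a_1^*a_2$ and $b(\xi a)=\xi(ba)$, $(\xi a)c=\xi(ac)$, the map $a\mapsto\xi a$ identifies the trivial correspondence $\m B$ isometrically with the left‑invariant submodule $\xi\m B\subseteq\m F$; a short computation (using that $\langle\xi,\cdot\rangle$ is strongly continuous and $\langle\xi,\xi a_\gamma\rangle=a_\gamma$) shows $\xi\m B$ is strongly closed, hence a von Neumann $\m B$-submodule, hence self-dual and complemented in $\m F$. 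Thus $\m F\cong\m B\oplus\m F'$ as correspondences with $\m F'=(\xi\m B)^{\perp}$, giving $(ii)$. Conversely, reading $(ii)$ as a correspondence isomorphism $\m F\cong\m B\oplus\m F'$ with $\m B$ the trivial correspondence, the preimage of $1_{\m B}\oplus 0$ is a central unit vector, so $(i)$ holds.

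\emph{$(i)\Rightarrow(iii)$.} Keep $\xi$ as above and set $b^{\sigma}:=\langle\xi,\mf i(\sigma)\rangle\in\m B$. Let $P\in\ma B^{a}(\m F)$ be the orthogonal projection onto $\xi\m B$; it is bilinear since $\xi\m B$ and $(\xi\m B)^{\perp}$ are both left‑invariant, and $Pf=\xi\langle\xi,f\rangle$ because $\langle\xi,\xi\rangle=1_{\m B}$, so $P\mf i(\sigma)=\xi b^{\sigma}$. Put $\mf i'(\sigma):=(1-P)\mf i(\sigma)\in\m F':=(\xi\m B)^{\perp}$ and $\mf L^{\sigma,\sigma'}(b):=\langle\mf i'(\sigma),b\mf i'(\sigma')\rangle$; since $(\m F',\mf i')$ is again a Kolmogorov-type datum, $\mf L$ is a CPD-kernel exactly as in Theorem \ref{thm1}. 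Using $b\xi=\xi b$, the orthogonality of $\xi\m B$ and $\m F'$, and the left‑invariance of $\m F'$,
\[
\mf K^{\sigma,\sigma'}(b)=\langle\mf i(\sigma),b\mf i(\sigma')\rangle=\langle\xi b^{\sigma},\xi bb^{\sigma'}\rangle+\langle\mf i'(\sigma),b\mf i'(\sigma')\rangle=b^{\sigma*}bb^{\sigma'}+\mf L^{\sigma,\sigma'}(b),
\]
and $b\mapsto b^{\sigma*}bb^{\sigma'}$ is a CPD-kernel since $\sum_{i,j}c_i^*b^{\sigma_i*}b_i^*b_jb^{\sigma_j}c_j=\bigl(\sum_i b_ib^{\sigma_i}c_i\bigr)^{*}\bigl(\sum_j b_jb^{\sigma_j}c_j\bigr)\ge 0$. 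For the ideal assertion, minimality gives that $\xi$ is a strong limit of a net $f_\gamma\in\mbox{span}\{b\mf i(\sigma)c\}$; applying the strongly continuous map $\langle\xi,\cdot\rangle$ and using $\langle\xi,b\mf i(\sigma)c\rangle=bb^{\sigma}c$ yields $1_{\m B}=\langle\xi,\xi\rangle\in\overline{\mbox{span}}^{\,s}\{bb^{\sigma}c:b,c\in\m B,\ \sigma\in\Omega\}$, i.e.\ the strongly closed two-sided ideal generated by $\{b^{\sigma}:\sigma\in\Omega\}$ is all of $\m B$.

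\emph{$(iii)\Rightarrow(i)$.} Write $\mf K=\mf N+\mf L$ with $\mf N^{\sigma,\sigma'}(b):=b^{\sigma*}bb^{\sigma'}$, which is a CPD-kernel by hypothesis. The trivial correspondence $\m B$ together with $\sigma\mapsto b^{\sigma}$ is a Kolmogorov decomposition of $\mf N$, whose minimal version lives on $\overline{\mbox{span}}^{\,s}\{bb^{\sigma}c\}$; by the ideal hypothesis this equals $\m B$, so the minimal Kolmogorov module of $\mf N$ is the whole trivial correspondence $\m B$, which carries the central unit vector $1_{\m B}$. Now $0\le\mf N\le\mf K$ in the sense that $\mf K-\mf N=\mf L$ is a CPD-kernel, so by the Radon--Nikodym/comparison theorem for CPD-kernels in the von Neumann setting there is a positive contraction $t\in\ma B^{a}(\m F)$ commuting with the left action $\pi$ of $\m B$ on $\m F$ with $\mf N^{\sigma,\sigma'}(b)=\langle\mf i(\sigma),\pi(b)t\,\mf i(\sigma')\rangle$. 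Since $t$ commutes with $\pi(b)$, the vectors $\eta_{\sigma}:=t^{1/2}\mf i(\sigma)\in\m F$ satisfy $\mf N^{\sigma,\sigma'}(b)=\langle\eta_{\sigma},\pi(b)\eta_{\sigma'}\rangle$, so $\m G:=\overline{\mbox{span}}^{\,s}\{b\eta_{\sigma}c\}\subseteq\m F$, a von Neumann sub-correspondence, together with $\sigma\mapsto\eta_{\sigma}$ is a minimal Kolmogorov decomposition of $\mf N$. By uniqueness, $\m G$ is isomorphic as a correspondence to the trivial correspondence $\m B$, hence contains a central unit vector; as the $\m B$-actions and inner product on $\m G$ are the restrictions of those on $\m F$, this is a central unit vector of $\m F$, and $(i)$ follows.

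The main obstacle is the step in $(iii)\Rightarrow(i)$ of lifting the minimal Kolmogorov module of $\mf N$ into that of $\mf K$: one cannot simply embed the first summand of $\m B\oplus\m F_{\mf L}$, because the minimal module of $\mf K$ is only the submodule generated by $\{b^{\sigma}\oplus\mf i_{\mf L}(\sigma)\}$ and in general contains neither $\m B\oplus 0$ nor $0\oplus\m F_{\mf L}$; the clean resolution is the Radon--Nikodym argument, and if that is not taken as known it must be supplied, together with the (standard but to-be-checked) point that the dominated kernel is represented by a positive contraction $t$ lying in $\ma B^{a}(\m F)$, so that $t^{1/2}\mf i(\sigma)$ stays inside the strongly closed module $\m F$. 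The remaining verifications — strong closedness and complementedness of $\xi\m B$, $\m G$ and their orthocomplements, and bilinearity and strong continuity of the projections involved — are routine consequences of self-duality of von Neumann $\m C$-modules and of the automatic adjointability and strong continuity of bounded module maps between them.
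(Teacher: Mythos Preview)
Your argument is correct and follows essentially the same route as the paper: the cycle $(i)\Leftrightarrow(ii)$ and $(i)\Rightarrow(iii)$ match the paper's $(i)\Rightarrow(ii)\Rightarrow(iii)$ (you are in fact more explicit about the ideal condition, using minimality and the strongly continuous map $\langle\xi,\cdot\rangle$, where the paper only says ``it is clear''), and your $(iii)\Rightarrow(i)$ via a positive contraction $t\in\pi(\m B)'$ and uniqueness of minimal Kolmogorov decompositions is exactly the paper's mechanism. The only real difference is packaging: the paper constructs the dominating contraction by hand---defining a bilinear contraction $v:\m F\to\m F''$ by $\mf i(\sigma)\mapsto\mf j(\sigma)$, checking $\|vx\|\le\|x\|$ from the CPD property of $\mf L$, and setting $w=v^*v$---whereas you invoke the Radon--Nikod\'ym theorem for CPD-kernels as a black box (and correctly flag that this must be supplied if not assumed). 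Either way one lands on $\langle t^{1/2}\mf i(\sigma),b\,t^{1/2}\mf i(\sigma')\rangle=b^{\sigma*}bb^{\sigma'}$ and then transports $1_{\m B}$ into $\m F$ via uniqueness.
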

\begin{proof}
 $(i)\Rightarrow (ii):$ Choose a unit vector $\zeta\in \ma C_{\m B} (\m F)$. Then using the linear map $b\zeta\mapsto b$ we identify the 
 von Neumann $\m B$-module generated by $\zeta$ (i.e., $\m B\zeta$) with $\m B$. $(ii)$ follows from the decomposition 
 $\m F=\m B\zeta\bigoplus {\m B\zeta}^\perp.$ 
 
 $(ii)\Rightarrow (iii):$ To prove this implication, it is enough to consider $\m F=\m B\bigoplus \m F'.$ Thus for each $\sigma\in \Omega$, 
 there exists $b^\sigma\in\m B$ and $\mf j(\sigma)\in\m F'$ such that  $\mf i(\sigma)=b^\sigma\oplus\mf j(\sigma)$. This implies 
 $\mf K^{\sigma,\sigma'}(b)=\langle \mf i(\sigma),b\mf i(\sigma')\rangle=\langle \mf j(\sigma),b\mf j(\sigma')\rangle+b^{\sigma*}bb^{\sigma'}$ 
 where $b,b^\sigma,b^{\sigma'}\in\m B$ and $\sigma,\sigma'\in\Omega$. Define $\mf L^{\sigma,\sigma'}(b)=\langle \mf j(\sigma),b\mf j(\sigma')\rangle$
 for all $\sigma,\sigma'\in\Omega$.  Therefore it is clear that $\mf L$ is the required kernel and the two sided strongly closed ideal generated by $\{b^\sigma:\sigma\in\Omega\}$ 
 equals to $\m B$.
 
  $(iii)\Rightarrow (i):$ Let $(\m F'',\mf j)$ be the Kolmogorov decomposition for the CPD-kernel
  $b\mapsto b^{\sigma*}bb^{\sigma'}.$  Since the difference of $\mf K$ and the kernel 
  $b\mapsto b^{\sigma*}bb^{\sigma'}$ 
  is the CPD-kernel $\mf L$, we get a bilinear contraction $v:\m F\to \m F''$ satisfying
  $v(\mf i(\sigma))= \mf j(\sigma)~\mbox{for all}~\sigma\in\Omega.$
  Indeed, for fixed $x=\displaystyle\sum^n_{m=1} b^{\sigma_m}\mf i(\sigma_m)b^{\sigma'_m}\in\m F$ we obtain 
\begin{align*}
 &\langle x,x\rangle-\langle vx,vx\rangle
 \\=&\left< \sum^n_{m=1} b^{\sigma_m}\mf i(\sigma_m)b^{\sigma'_m},\sum^n_{m=1} b^{\sigma_m}\mf i(\sigma_m)b^{\sigma'_m} \right>- 
 \left< v\left(\sum^n_{m=1} b^{\sigma_m}\mf i(\sigma_m)b^{\sigma'_m}\right),v\left(\sum^n_{m=1} b^{\sigma_m}\mf i(\sigma_m)b^{\sigma'_m}\right) \right>
 \\=& \displaystyle\sum^n_{m,l=1} b^{\sigma^{\prime*}_m}\mf L(\alpha(b^{\sigma_m})^* b^{\sigma_l})b^{\sigma'_l}\geq 0,
\end{align*}
i.e., $\|vx\|\leq \|x\|$ for each $x\in \m F$. So the map $v$ extends to a contraction from the strong operator topology closure of $\m F$ 
 to 
the strong operator topology closure of $\m F''$. Since all von Neumann modules are self-dual (cf. \cite[Theorem 4.16]{Sk00}), this extended map $v$ is also adjointable. 
Therefore we have a 
positive contraction $w:=v^*v$, and  
$b^{\sigma*}bb^{\sigma'}=\langle \mf i(\sigma),wb\mf i(\sigma')\rangle$ for all $b\in\m B$; $\sigma,\sigma'\in\Omega.$ Since $w$ 
commutes with each $b\in\m B$, 
we infer that $\sqrt{w}\in \m B'\subset \ma B^a(\overline{span}^s \m F)$. Thus we have
\begin{align}\label{eqn5}
b^{\sigma*}bb^{\sigma'}=\langle \sqrt{w}\mf i(\sigma),b\sqrt{w}\mf i(\sigma')\rangle~\mbox{for all}~b\in\m B;~\sigma,\sigma'\in\Omega.
 \end{align}
Because $1_{\m B}$ belongs to the two sided strongly closed ideal generated by the subset $\{b^\sigma:\sigma\in\Omega\}$ of $\m B$, from 
Equation \ref{eqn5} it follows that
there exists $f\in \m F$ such that $b=\langle f,bf\rangle$ for all $b\in\m B$. This implies that $f$ is a unit vector. Indeed, 
\begin{align*}
 \langle bf-fb,bf-fb\rangle&=\langle bf,bf\rangle-\langle bf,fb\rangle-\langle fb,bf\rangle+\langle fb,fb\rangle
 \\& =\langle f,b^*bf\rangle-\langle f,b^*f\rangle b-b^*\langle f,bf\rangle+b^*\langle f,f\rangle b=0,
\end{align*}
i.e., $f\in \ma C_{\m B} (\m F).$
\end{proof}

 Suppose $\m B$ is a von Neumann algebra. The following rigidity theorem for CPD-kernels shows that if the Bures distance between a CPD-kernel over $\Omega$ from $\m B$ to $\m B$ and the identity kernel is less than one, then the S-correspondence arising out of the Kolmogorov decomposition of the CPD-kernel contains a copy of $\m B$:

\begin{theorem}
 Suppose $\m B$ is a von Neumann algebra acting on a Hilbert space $\m H$ and $\Omega$ is a set. Assume $(\m F,\mf i)$ to be a 
Kolmogorov decomposition for the CPD-kernel $\mf K$ over $\Omega$ from $\m B$ to $\m B$ and $\beta(\mf K,id_{\m B})<1$ 
where CPD-kernel $id_{\m B}^{\sigma,\sigma'}:=id_{\m B}$ for each $\sigma,\sigma'\in\Omega$. Then
$\m F$ is isomorphic to $\m B\bigoplus \m F'$ where $\m F'$ is a von Neumann $\m B$-$\m B$-module.
\end{theorem}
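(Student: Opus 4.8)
The plan is to mine the hypothesis $\beta(\mf K,id_{\m B})<1$ for a \emph{unit vector in the center} $\ma C_{\m B}(\m F)=\{f\in\m F:bf=fb\text{ for all }b\in\m B\}$ and then to quote the implication $(i)\Rightarrow(ii)$ of Proposition~\ref{prop3}. Note first that, since the automorphism $\alpha$ equals $id_{\m B}$, an $\alpha$-CPD-kernel is just a CPD-kernel and the unitary in its Kolmogorov decomposition is the identity operator; thus $\m F$ is an ordinary $C^*$-correspondence over $\m B$ whose left action $\pi$ is an honest $*$-representation, so that $\langle bx,y\rangle=\langle x,b^*y\rangle$ for $x,y\in\m F$, $b\in\m B$. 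The trivial correspondence $\m B$ over itself, together with $\mf j(\sigma):=1_{\m B}$ for every $\sigma\in\Omega$, is the minimal Kolmogorov decomposition of $id_{\m B}$, because $\langle 1_{\m B},b\,1_{\m B}\rangle=b=id_{\m B}^{\sigma,\sigma'}(b)$ and $\overline{\mbox{span}}^s\{b\,1_{\m B}\,c:b,c\in\m B\}=\m B$.

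The second step describes $M(\mf K,id_{\m B})$ using the decomposition $(\m F,\mf i)$ for $\mf K$ and $(\m B,\mf j)$ for $id_{\m B}$, which is legitimate by Proposition~\ref{prop2}(i). One checks directly that adjointable bilinear maps $T\colon\m B\to\m F$ correspond bijectively to elements of $\ma C_{\m B}(\m F)$ via $T\mapsto\xi:=T(1_{\m B})$ (bilinearity forces $b\xi=\xi b$; conversely $b\mapsto b\xi$ is adjointable with adjoint $f\mapsto\langle\xi,f\rangle$), and that under this correspondence $\|T\|^2=\|\langle\xi,\xi\rangle\|=\|\xi\|^2$. Since $T\mf j(\sigma)=\xi$, we obtain $M(\mf K,id_{\m B})=\{\langle\mf i(\sigma),\xi\rangle:\sigma\in\Omega,\ \xi\in\ma C_{\m B}(\m F),\ \|\xi\|\le1\}$, so Proposition~\ref{prop2}(iii) reads
$$\beta(\mf K,id_{\m B})=\inf\Big\{\big\|\mf K^{\sigma,\sigma}(1_{\m B})+1_{\m B}-2\mbox{Re}\langle\mf i(\sigma),\xi\rangle\big\|^{1/2}:\sigma\in\Omega,\ \xi\in\ma C_{\m B}(\m F),\ \|\xi\|\le1\Big\}.$$

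The crux is a short estimate. Since the infimum above is strictly less than $1$, choose $\sigma\in\Omega$ and $\xi\in\ma C_{\m B}(\m F)$ with $\|\xi\|\le1$ and $\lambda:=\big\|k+1_{\m B}-2\mbox{Re}(a)\big\|<1$, where $k:=\mf K^{\sigma,\sigma}(1_{\m B})=\langle\mf i(\sigma),\mf i(\sigma)\rangle\ge0$ and $a:=\langle\mf i(\sigma),\xi\rangle$. Expanding $0\le\langle\mf i(\sigma)-\xi,\mf i(\sigma)-\xi\rangle=k-2\mbox{Re}(a)+\langle\xi,\xi\rangle$ gives $k-2\mbox{Re}(a)\ge-\langle\xi,\xi\rangle$, whence
$$1_{\m B}-\langle\xi,\xi\rangle\ \le\ k+1_{\m B}-2\mbox{Re}(a)\ \le\ \lambda\,1_{\m B}.$$
As $\|\xi\|\le1$ gives $1_{\m B}-\langle\xi,\xi\rangle\ge0$, the middle term is positive and the chain forces $\langle\xi,\xi\rangle\ge(1-\lambda)1_{\m B}>0$, so $\langle\xi,\xi\rangle$ is invertible in $\m B$. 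Centrality of $\xi$ puts $\langle\xi,\xi\rangle$, and hence $\langle\xi,\xi\rangle^{-1/2}$, in the center of $\m B$; therefore $\eta:=\xi\langle\xi,\xi\rangle^{-1/2}$ satisfies $b\eta=\eta b$ for all $b\in\m B$ and $\langle\eta,\eta\rangle=1_{\m B}$, i.e. $\eta$ is a unit vector in $\ma C_{\m B}(\m F)$.

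It then remains to apply Proposition~\ref{prop3}, $(i)\Rightarrow(ii)$: identifying $\overline{\m B\eta}^s$ with $\m B$ via $b\eta\mapsto b$, we get $\m F\cong\m B\bigoplus\m F'$ with $\m F'=(\overline{\m B\eta}^s)^{\perp}$ a von Neumann $\m B$-submodule. Since $\eta$ is central, $\overline{\m B\eta}^s=\overline{\eta\m B}^s$ is invariant under the $*$-representation $\pi$, and hence so is its orthogonal complement; thus both summands, $\m F'$ in particular, are von Neumann $\m B$-$\m B$-modules, which is the assertion. I expect the genuinely delicate point to be only the estimate in the crux step — noticing that the normalization $\|\xi\|\le1$ costs nothing yet is exactly what turns the hypothesis $\lambda<1$ into a strictly positive lower bound for $\langle\xi,\xi\rangle$, hence into invertibility; the remaining ingredients are just the explicit description of $M(\mf K,id_{\m B})$ and Propositions~\ref{prop2} and~\ref{prop3}.
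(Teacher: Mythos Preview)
Your argument is correct, and it reaches the conclusion by a route that is somewhat more direct than the paper's own proof. The paper proceeds via Corollary~\ref{cor1}: it works in the common S-correspondence $\m B\oplus\m F$, finds $\mf j'_2(\sigma)=b^{\sigma}\oplus\mf j_2(\sigma)$ with $\|1_{\m B}-b^{\sigma}\|<1$, deduces that each $b^{\sigma}$ is invertible so that the ideal generated by $\{b^{\sigma}\}$ is all of $\m B$, and then invokes the implication $(iii)\Rightarrow(ii)$ of Proposition~\ref{prop3}. That implication in turn passes through $(iii)\Rightarrow(i)$, whose proof requires constructing a bilinear contraction $v$, forming $w=v^*v$, and extracting a central unit vector from $\sqrt{w}\,\mf i(\sigma)$.

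You instead identify $M(\mf K,id_{\m B})$ directly with $\{\langle\mf i(\sigma),\xi\rangle:\sigma\in\Omega,\ \xi\in\ma C_{\m B}(\m F),\ \|\xi\|\le1\}$ via the bijection $\ma B^{a,bil}(\m B,\m F)\cong\ma C_{\m B}(\m F)$, and then the short inequality $1_{\m B}-\langle\xi,\xi\rangle\le k+1_{\m B}-2\mbox{Re}(a)\le\lambda\,1_{\m B}$ hands you invertibility of $\langle\xi,\xi\rangle$ and hence a central unit vector $\eta=\xi\langle\xi,\xi\rangle^{-1/2}$ immediately. This lets you appeal only to the easy implication $(i)\Rightarrow(ii)$ of Proposition~\ref{prop3} (which you in fact re-prove), bypassing both Corollary~\ref{cor1} and the more involved $(iii)\Rightarrow(i)$ step. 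Your approach also handles the non-minimal case without a separate reduction, since Proposition~\ref{prop2}(i) already makes $M(\mf K,id_{\m B})$ independent of the chosen decomposition and the splitting $\m F=\overline{\m B\eta}^s\oplus(\overline{\m B\eta}^s)^{\perp}$ needs no minimality. The paper's route has the advantage of exhibiting the decomposition $\mf K^{\sigma,\sigma'}(b)=b^{\sigma*}bb^{\sigma'}+\mf L^{\sigma,\sigma'}(b)$ explicitly along the way, but for the bare statement your argument is leaner.
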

\begin{proof}
 It is enough to consider the case when $(\m F,\mf i)$ is the minimal
Kolmogorov decomposition for the CPD-kernel $\mf K$. Assume $\beta(\mf K,id_{\m B})<1-\epsilon$ for some $\epsilon>0$. Note that
$(\m B,1_{\m B}))$ is the Kolmogorov decomposition for $id_{\m B}$. Therefore it follows from Corollary \ref{cor1} that
there exists $\mf j'_1(\sigma)=1_{\m B}\oplus 0,\mf j'_2(\sigma)=b^{\sigma}\oplus \mf j_2(\sigma)\in \m B\bigoplus \m F$
such that $\|\mf j'_1(\sigma)-\mf j'_2(\sigma)\|\leq \beta(\mf K,id_{\m B})+\epsilon<1.$ Indeed, we have 
$\mf K^{\sigma,\sigma'}(b)=\langle \mf j'_2(\sigma),b\mf j'_2(\sigma')\rangle=b^{\sigma*}bb^{\sigma'}+
\langle \mf j_2(\sigma),b\mf j_2(\sigma')\rangle$ and $\|1-b^{\sigma}\|\leq \|\mf j'_1(\sigma)-\mf j'_2(\sigma)\|<1$ for each 
$\sigma,\sigma'\in\Omega$
and $b\in\m B.$ Thus, each $b^{\sigma}$ is invertible, and this implies that two sided strongly closed ideal generated by 
$\{b^\sigma:\sigma\in\Omega\}$ is $\m B$. Hence, the theorem follows using the part $(iii)\Rightarrow (ii)$ of Proposition \ref{prop3}. 
\end{proof}

\section{Transition probability}

 Let $\m B$ and $\m C$ be unital $C^*$-algebras, $\alpha:\m B\to\m B $ be an automorphism, $\Omega$ be a set and $\mf K_1,\mf K_2\in\m K^{\alpha}_{\Omega}(\m B,\m C)$. 
 For a Hilbert $\m C$-module $\m F$ which is a common S-correspondence for 
 $\mf K_1$ and $\mf K_2$, recall that for $m=1,2,$ the set $\ma S(\m F,\mf K_m)$ denote the set of all functions $\mf i_m:\Omega\to \m F$ such that 
$$  \mf{K}^{\sigma,\sigma'}_m(b)= \langle \mf{i}_m(\sigma), b
\mf{i}_m(\sigma')\rangle=\langle \alpha(b^*)\mf{i}_m(\sigma), 
\mf{i}_m(\sigma')\rangle~\mbox{where}~\sigma, \sigma'\in \Omega~\mbox{and}~b\in \m B.
 $$
Define the transition probability $P(\mf K_1,\mf K_2)$ of  $\mf K_1$ and $\mf K_2$ by

$$\ma P(\mf K_1,\mf K_2):=sup~\{\|\langle\mf i_1(\sigma),\mf i_2(\sigma)\rangle\|^2:\mf i_m\in\ma S(\m F,\mf K_m)
~\mbox{for all}~m=1,2,~\mbox{and}~\sigma\in\Omega\}.$$
For an automorphism $\alpha:\m B\to\m B$, let $\ma F(\mf K_1, \mf K_2)$ be the set whose elements are kernels $\mf L$ over the set $\Omega$
from $\m B$ to $\m C$ such that  
$$\|\mf L^{\sigma,\sigma'}(\alpha(b^*)b')\|^2\leq\|\mf K^{\sigma,\sigma}_1(\alpha(b^*)b)\|\|\mf K^{\sigma^{\prime},\sigma'}_2(\alpha(b^{\prime*})b')\|
~\mbox{for all}~b,b'\in\m B;~\sigma,\sigma'\in\Omega.$$

If $\mf i_m\in\ma S(\m F,\mf K_m)$ for $m=1,2$, then for each
$\sigma,\sigma'\in\Omega$ 
$$\mf L^{\sigma,\sigma'}(b):=\langle \mf i_1(\sigma),b\mf i_2(\sigma')\rangle~\mbox{for all}~ b\in\m B$$
defines a kernel over $\Omega$ from $\m B$ to $\m C$.
Indeed, $\mbox{for each}~ b\in\m B$ and $\sigma,\sigma'\in\Omega$ the computation
\begin{align*}
 \|\mf L^{\sigma,\sigma'}(\alpha(b^*)b')\|^2&=\|\langle \mf i_1(\sigma),\alpha(b^*)b'\mf i_2(\sigma')\rangle\|^2
 =\|\langle b\mf i_1(\sigma),b'\mf i_2(\sigma')\rangle\|^2 
 \\&\leq \| b\mf i_1(\sigma)\|^2 \|b'\mf i_2(\sigma')\|^2
 =\|\langle b\mf i_1(\sigma),b\mf i_1(\sigma)\rangle\|\|\langle b'\mf i_2(\sigma'),b'\mf i_2(\sigma')\rangle\|
 \\&=\|\mf K^{\sigma,\sigma}_1(\alpha(b^*)b)\|\|\mf K^{\sigma',\sigma'}_2(\alpha(b^{\prime*})b')\|,
\end{align*}
implies that $\mf L\in \ma F(\mf K_1, \mf K_2).$

 A kernel $\mf K\in\m K^{\alpha}_{\Omega}(\m B,\m C)$ is called {\it unital} if 
 $\mf K^{\sigma,\sigma}(1_{\m B})=1_{\m C}$ for all $\sigma\in\Omega$. 
 We obtain an alternate description of transition probability in the following theorem. Similar result for transition probability is known for states over unital 
 $C^*$-algebras \cite[Theorem 1]{Al83}, and 
for states over unital $*$-algebras \cite[Lemma 2.4]{Uh85}:

\begin{theorem}
 Suppose $\m B$ is a unital $C^*$-algebra and $\m C$ is a von Neumann algebra acting on a Hilbert space $\m H$. Let $\Omega$ be a set. 
 Then the transition probability between
 unital kernels $\mf K_1,\mf K_2\in\m K^{\alpha}_{\Omega}(\m B,\m C)$ satisfies
 \[
  \ma P(\mf K_1,\mf K_2)=sup~\{\|\mf L^{\sigma,\sigma}(1_{\m B})\|^2:\mf L\in \ma F(\mf K_1, \mf K_2)\}.
 \]

\end{theorem}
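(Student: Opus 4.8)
The plan is to prove the two inequalities $\ma P(\mf K_1,\mf K_2)\le\sup\{\|\mf L^{\sigma,\sigma}(1_{\m B})\|^2:\mf L\in\ma F(\mf K_1,\mf K_2)\}$ and its reverse, following the strategy Alberti used for states. The first one is essentially the computation recorded just before the statement: for a common S-correspondence $\m F$ of $\mf K_1,\mf K_2$ and $\mf i_m\in\ma S(\m F,\mf K_m)$, the kernel $\mf L^{\sigma,\sigma'}(b):=\langle\mf i_1(\sigma),b\mf i_2(\sigma')\rangle$ belongs to $\ma F(\mf K_1,\mf K_2)$, and, since $\mf K_1,\mf K_2$ are unital, $\mf L^{\sigma,\sigma}(1_{\m B})=\langle\mf i_1(\sigma),\mf i_2(\sigma)\rangle$; hence $\|\langle\mf i_1(\sigma),\mf i_2(\sigma)\rangle\|^2$ is bounded by the right-hand supremum, and taking the supremum over $\m F$, $\mf i_m$ and $\sigma$ yields $\ma P(\mf K_1,\mf K_2)\le\sup\{\|\mf L^{\sigma,\sigma}(1_{\m B})\|^2:\mf L\in\ma F(\mf K_1,\mf K_2)\}$.

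For the reverse inequality, fix $\mf L\in\ma F(\mf K_1,\mf K_2)$ and $\sigma_0\in\Omega$; it is enough to show $\|\mf L^{\sigma_0,\sigma_0}(1_{\m B})\|^2\le\ma P(\mf K_1,\mf K_2)$. Let $(\m F_m,U_m,\mf i_m)$ be the minimal Kolmogorov decompositions of $\mf K_m$ furnished by Theorem \ref{thm1}, which we may take to be von Neumann $\m C$-modules since $\m C$ is a von Neumann algebra, and let $\pi_m$ be the associated left actions. On the generating submodules $\mathrm{span}\,\pi_m(\m B)\mf i_m(\Omega)\m C$ of $\m F_m$ I would introduce the sesquilinear map determined by $B_0(\pi_1(b)\mf i_1(\sigma)c,\pi_2(b')\mf i_2(\sigma')c'):=c^*\mf L^{\sigma,\sigma'}(\alpha(b^*)b')c'$, and the first, and main, step is to verify that $B_0$ is well defined and satisfies $\|B_0(\xi,\eta)\|^2\le\|\langle\xi,\xi\rangle\|\,\|\langle\eta,\eta\rangle\|$; this is where the inequality defining $\ma F(\mf K_1,\mf K_2)$ is used, together with the identities $\mf K_m^{\sigma,\sigma}(\alpha(b^*)b)=\langle\pi_m(b)\mf i_m(\sigma),\pi_m(b)\mf i_m(\sigma)\rangle$, and it is the exact analogue of the Cauchy--Schwarz estimate for the GNS/Paschke form in Alberti's proof. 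Granting this, $B_0$ extends to a bilinear map $B\colon\m F_1\times\m F_2\to\m C$ with the same bound, and, by self-duality of von Neumann modules, there is a unique $T\in\ma B^{a,bil}(\m F_2,\m F_1)$ with $\|T\|\le1$ and $B(\xi,\eta)=\langle\xi,T\eta\rangle$.

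Once $T$ is available, unitality gives $\langle\mf i_1(\sigma_0),T\mf i_2(\sigma_0)\rangle=B(\mf i_1(\sigma_0),\mf i_2(\sigma_0))=\mf L^{\sigma_0,\sigma_0}(\alpha(1_{\m B})^*1_{\m B})=\mf L^{\sigma_0,\sigma_0}(1_{\m B})$, so $\mf L^{\sigma_0,\sigma_0}(1_{\m B})\in M(\mf K_1,\mf K_2)$. By Proposition \ref{prop2}(ii) we have $M(\mf K_1,\mf K_2)=N(\mf K_1,\mf K_2)$ (equivalently, form the common S-correspondence $\m F_1\oplus\m F_2$ with $\mf j_1=\mf i_1\oplus0$ and $\mf j_2=T\mf i_2\oplus\sqrt{id_{\m F_2}-T^*T}\,\mf i_2$, exactly as in its proof, so that $\mf L^{\sigma_0,\sigma_0}(1_{\m B})=\langle\mf j_1(\sigma_0),\mf j_2(\sigma_0)\rangle$ with $\mf j_m\in\ma S(\m F_1\oplus\m F_2,\mf K_m)$), and since $\ma P(\mf K_1,\mf K_2)=\sup\{\|m\|^2:m\in N(\mf K_1,\mf K_2)\}$, we obtain $\|\mf L^{\sigma_0,\sigma_0}(1_{\m B})\|^2\le\ma P(\mf K_1,\mf K_2)$. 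Together the two inequalities give the asserted equality.

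The main obstacle is precisely the well-definedness and module contractivity of $B_0$ in the preceding paragraph. The defining inequality of $\ma F(\mf K_1,\mf K_2)$ is imposed only on ``single'' elements $\alpha(b^*)b'$, whereas $B_0$ has to be controlled on finite sums $\sum_i\pi_m(b_i)\mf i_m(\sigma_i)c_i$, and, because $\m C$ need not be commutative, such sums do not reduce to a single term $\pi_m(b)\mf i_m(\sigma)c$ the way GNS vectors $\pi(a)\xi$ do in the scalar-valued case treated by Alberti and Uhlmann. The argument must therefore use the full family of inequalities defining $\ma F(\mf K_1,\mf K_2)$, together with the minimality of the Kolmogorov decompositions, to pass from the pointwise estimate to the operator Cauchy--Schwarz estimate $\|B_0(\xi,\eta)\|^2\le\|\langle\xi,\xi\rangle\|\,\|\langle\eta,\eta\rangle\|$; everything afterwards is bookkeeping with the structure already supplied by Theorem \ref{thm1} and Proposition \ref{prop2} and with self-duality of von Neumann modules.
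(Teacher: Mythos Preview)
Your route diverges from the paper's in the hard inequality $\sup\{\|\mf L^{\sigma,\sigma}(1_{\m B})\|^2\}\le\ma P(\mf K_1,\mf K_2)$. Both arguments begin the same way: from the defining inequality of $\ma F(\mf K_1,\mf K_2)$ one wants a contractive bilinear operator $T$ realising $\mf L^{\sigma,\sigma}(1_{\m B})$ as $\langle\mf i_1(\sigma),T\mf i_2(\sigma)\rangle$. From that point on the two proofs separate. You invoke Proposition~\ref{prop2}(ii), i.e.\ $M(\mf K_1,\mf K_2)=N(\mf K_1,\mf K_2)$, which places $\mf L^{\sigma,\sigma}(1_{\m B})$ directly in $N(\mf K_1,\mf K_2)$ and hence bounds its squared norm by $\ma P(\mf K_1,\mf K_2)$. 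The paper instead works inside a single common S-correspondence $\m F$, checks that the resulting operator $T_2$ commutes with the left $\m B$-action, and then applies the Russo--Dye theorem to approximate $T_2$ by a convex combination $\sum_k r_kU_k$ of unitaries in the commutant of $\m B$; since each $U_k\mf i_1\in\ma S(\m F,\mf K_1)$, one gets $\|\mf L^{\sigma,\sigma}(1_{\m B})\|\le\epsilon+\sum_k r_k\|\langle U_k\mf i_1(\sigma),\mf i_2(\sigma)\rangle\|\le\epsilon+\ma P(\mf K_1,\mf K_2)^{1/2}$. Your route is tighter and reuses machinery already established in Section~4; the paper's route is more self-contained and makes the commutant structure do the work.

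On the obstacle you flag: the paper does not resolve it either. After recording the pointwise bound $\|\mf L^{\sigma,\sigma'}(\alpha(b^*)b')\|\le\|b\mf i_1(\sigma)\|\,\|b'\mf i_2(\sigma')\|$, the paper simply asserts that ``this implies that the function $(b\mf i_1(\sigma),b'\mf i_2(\sigma'))\mapsto\mf L^{\sigma,\sigma'}(\alpha(b^*)b')$ is a densely defined, bounded sesquilinear $\m C$-valued form on $\m F_1\times\m F_2$'', and proceeds to Riesz-represent it. Your worry is legitimate: in the Alberti--Uhlmann situation the set $\{\pi(b)\xi:b\in\m B\}$ is already a linear subspace, so the pointwise bound is all one needs; here, with several $\sigma\in\Omega$ and right $\m C$-coefficients, finite sums are not single generators and the passage from the pointwise inequality to well-definedness and contractivity on $\mathrm{span}\{b\mf i_m(\sigma)c\}$ is not automatic. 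So your sketch and the paper's proof share the same gap at exactly the same place; you have merely been more candid about it.
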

\begin{proof}
 Assume $(\m F,U)$ to be a common S-correspondence for 
 $\mf K_1$ and $\mf K_2$. Let $\mf i_m\in\ma S(\m F,\mf K_m)$ for all $m=1,2.$ Define $\m F^{\prime}_1$ and $\m F^{\prime}_2$ to be the submodules 
 $\overline{span}\{b\mf i_1(\sigma):b\in\m B,~\sigma\in\Omega\}$ and $\overline{span}\{b\mf i_2(\sigma):b\in\m B,~\sigma\in\Omega\}$ of $\m F$, respectively.
 Suppose $\m F_1$ and $\m F_2$ are von Neumann submodules of $\m F$ defined as the strong operator topology closure of $\m F^{\prime}_1$ and $\m F^{\prime}_2$,
 respectively. Since all von Neumann modules are self-dual, they are complemented 
in all Hilbert $C^*$-modules which contain them as submodules (cf. \cite{Sk06}). Indeed, we get two orthogonal projections $P_1:\m F\to \m F_1$ and 
$P_2:\m F\to \m F_2$ which commutes with each $b\in\m B$. 

For each $\mf L\in \ma F(\mf K_1, \mf K_2),$   we have
\begin{align}\label{eqn6}\|\mf L^{\sigma,\sigma'}(\alpha(b^*)b')\|^2
\leq\|\mf K^{\sigma,\sigma}_1(\alpha(b^*)b)\|\|\mf K^{\sigma',\sigma'}_2(\alpha(b^{\prime*})b')\|
 =\| b\mf i_1(\sigma)\|^2 \|b'\mf i_2(\sigma')\|^2
\end{align}
for all $b,b'\in\m B;~\sigma\in\Omega.$ This implies that the function $(b\mf i_1(\sigma),b'\mf i_2(\sigma'))\mapsto \mf L^{\sigma,\sigma'}(\alpha(b^*)b')$ 
is a densely defined, bounded sesquilinear $\m C$-valued form defined on $\m F_1\times \m F_2$. Since all
von Neumann $\m C$-modules are self-dual, Riesz representation theorem for $\m C$-functionals holds. This yields that there exists a (unique) bounded $\m C$-linear operator $T_1:\m F_1\to\m F_2$
satisfying
\[ \mf L^{\sigma,\sigma'}(\alpha(b^*)b')=\langle T_1b\mf i_1(\sigma),b'\mf i_2(\sigma')\rangle~\mbox{for all}~ b,b'\in\m B~\mbox{and}~\sigma,\sigma'\in\Omega. 
\]
From Inequality \ref{eqn6}, it is clear that $T_1$ is a contraction. Define $T_2:=P_2T_1P_1\in \ma B^a(\m F).$ 
Since $\|T_1\|\leq 1,$ we have $\|T_2\|\leq 1.$ For $b,b',b''\in\m B;~\sigma,\sigma'\in\Omega$ we have
\begin{align*}
 \langle T_2(b(b'\mf i_1(\sigma))),b''\mf i_2(\sigma')\rangle
 &=\langle T_1(bb')\mf i_1(\sigma)),b''\mf i_2(\sigma')\rangle=\mf L^{\sigma,\sigma'}(\alpha(b^{\prime*}b^*)b'')
 \\& =\mf L^{\sigma,\sigma'}(\alpha(b^{\prime*})\alpha(b^*)b'')=\langle T_1 b'\mf i_1(\sigma),\alpha(b^*)b''\mf i_2(\sigma')\rangle
 \\&=\langle bT_2 b'\mf i_1(\sigma)),b''\mf i_2(\sigma')\rangle,
\end{align*}
which implies $\mf L^{\sigma,\sigma'}(\alpha(b^*)b')=\langle T_2b\mf i_1(\sigma),b'\mf i_2(\sigma')\rangle
~\mbox{for all}~ b,b'\in\m B;~\sigma\in\Omega.$ It also yields that $T_2$ commutes with each $b\in\m B$. Russo and Dye (cf. \cite[Theorem I.8.4]{Da96})
proved that for any $\epsilon>0,$ there exist real numbers 
$r_1,r_2
,\ldots,r_n\geq 0$ such that $\sum^n_{k=1}r_k =1$, and there exist unitary elements $U_1,U_2,\ldots,U_n$ in the commutant of $\m B$ which
satisfy
\[
 \left\|T_2-\sum^n_{k=1}U_kr_k\right\|<\epsilon.
\]
This implies 
\begin{align*}\left\|\mf L^{\sigma,\sigma}(1_{\m B})-\sum^n_{k=1}r_k\langle U_k\mf i_1(\sigma),\mf i_2(\sigma)\rangle\right\|
&=\left\|\left< T_2\mf i_1(\sigma),\mf i_2(\sigma)\right>-\sum^n_{k=1}r_k\langle U_k\mf i_1(\sigma),\mf i_2(\sigma)\rangle\right\|
\\&=\left\|\left< \left(T_2-\sum^n_{k=1}r_k U_k\right)\mf i_1(\sigma),\mf i_2(\sigma)\right>\right\|<\epsilon,
\end{align*}
and therefore we get $\|\mf L^{\sigma,\sigma}(1_{\m B})\|<\epsilon+\sum^n_{k=1}r_k\|\langle U_k\mf i_1(\sigma),\mf i_2(\sigma)\rangle\|$. Observe that
$U_k\mf i_1\in \ma S(\m F,\mf K_1),$ for $U_k$ belong to the commutant of $\m B$ where $1\leq k\leq n.$ Thus from the previous estimate and the definition
of transition probability, we conclude that $\|\mf L^{\sigma,\sigma}(1_{\m B})\|<\epsilon+\ma P(\mf K_1,\mf K_2)^{\frac{1}{2}}.$ Because $\epsilon$ is arbitrary,
$\|\mf L^{\sigma,\sigma}(1_{\m B})\|\leq\ma P(\mf K_1,\mf K_2)^{\frac{1}{2}}.$ Thus
$  \ma P(\mf K_1,\mf K_2)\geq sup~\{\|\mf L^{\sigma,\sigma}(1_{\m B})\|^2:\mf L\in \ma F(\mf K_1, \mf K_2)\}.$

Conversely, for each $\epsilon>0,$ there exist $\mf i_1\in \ma S(\m F,\mf K_1),$ $\mf i_2\in \ma S(\m F,\mf K_2)$ and $\sigma\in \Omega$ such that 
$\|\langle \mf i_1(\sigma),\mf i_2(\sigma)\rangle\|^2\geq \ma P(\mf K_1,\mf K_2)-\epsilon.$ 
For each $\sigma\in\Omega,$ define a bounded linear map from $\m B$ to $\m C$ by
$$\mf L^{\sigma,\sigma'}(b):=\langle \mf i_1(\sigma),b\mf i_2(\sigma')\rangle~\mbox{for all}~ b\in\m B.$$
Thus $\mf L\in \ma F(\mf K_1, \mf K_2)$ and 
$\|\mf L^{\sigma,\sigma}(1_{\m B})\|^2=\|\langle \mf i_1(\sigma),\mf i_2(\sigma)\rangle\|^2\geq  \ma P(\mf K_1,\mf K_2)-\epsilon.$
Because this is true for each $\epsilon>0,$ 
 \[
  \ma P(\mf K_1,\mf K_2)\leq sup~\{\|\mf L^{\sigma,\sigma}(1_{\m B})\|^2:\mf L\in \ma F(\mf K_1, \mf K_2)\}.
 \]
 So the theorem holds.
\end{proof}
\vspace{0.5cm} 
\noindent{\bf Acknowledgements:} The first author was supported by the Seed Grant from IRCC, IIT Bombay. 
Harsh Trivedi thanks IRCC, IIT Bombay for the research associateship and ISI Bangalore for the visiting scientist fellowship.


\begin{thebibliography}{10}
 	
 	\bibitem{Al83}
 	Peter~M. Alberti, \emph{A note on the transition probability over {$C^{\ast}
 			$}-algebras}, Lett. Math. Phys. \textbf{7} (1983), no.~1, 25--32. MR{691969
 		(84c:46065)}
 	
 	\bibitem{BBLS04}
 	Stephen~D. Barreto, B.~V.~Rajarama Bhat, Volkmar Liebscher, and Michael Skeide,
 	\emph{Type {I} product systems of {H}ilbert modules}, J. Funct. Anal.
 	\textbf{212} (2004), no.~1, 121--181. MR{2065240 (2005d:46147)}
 	
 	\bibitem{BSu13}
 	B.~V.~Rajarama Bhat and K.~Sumesh, \emph{Bures distance for completely positive
 		maps}, Infin. Dimens. Anal. Quantum Probab. Relat. Top. \textbf{16} (2013),
 	no.~4, 1350031, 22. MR{3192708}
 	
 	\bibitem{B69}
 	Donald Bures, \emph{An extension of {K}akutani's theorem on infinite product
 		measures to the tensor product of semifinite {$w^{\ast} $}-algebras}, Trans.
 	Amer. Math. Soc. \textbf{135} (1969), 199--212. MR{0236719 (38 \#5014)}
 	
 	\bibitem{Da96}
 	Kenneth~R. Davidson, \emph{{$C^*$}-algebras by example}, Fields Institute
 	Monographs, vol.~6, American Mathematical Society, Providence, RI, 1996.
 	MR{1402012 (97i:46095)}
 	
 	\bibitem{DH14}
 	Santanu Dey and Harsh Trivedi, \emph{{$\mathfrak{K}$}-families and
 		{CPD-H}-extendable families}, to appear in Rocky Mountain Journal of
 	Mathematics, arXiv:1409.3655v1 (2014).
 	
 	\bibitem{DT15}
 	Santanu Dey and Harsh Trivedi, \emph{{KSGNS} construction for $\tau$-maps on {S}-modules and
 		{$\mf{K}$}-families}, preprint, arXiv:1511.05755 (2015).
 	
 	\bibitem{He99}
 	Jaeseong Heo, \emph{Completely multi-positive linear maps and representations
 		on {H}ilbert {$C^*$}-modules}, J. Operator Theory \textbf{41} (1999), no.~1,
 	3--22. MR{1675235 (2000a:46103)}
 	
 	\bibitem{H15}
 	Jaeseong Heo, \emph{Bures distance for {$\alpha$}-completely positive maps and transition probability between {$P$}-functionals}, Taiwanese J. Math.
 	\textbf{19} (2015), no.~1, 159--174. MR{3313410}
 	
 	\bibitem{HJ10}
 	Jaeseong Heo and Un~Cig Ji, \emph{Radon-{N}ikod\'ym type theorem for
 		{$\alpha$}-completely positive maps}, J. Math. Phys. \textbf{51} (2010),
 	no.~10, 103505, 10. MR{2761319 (2012b:46104)}
 	
 	\bibitem{JKP09}
 	Nathaniel Johnston, David~W. Kribs, and Vern~I. Paulsen, \emph{Computing
 		stabilized norms for quantum operations via the theory of completely bounded
 		maps}, Quantum Inf. Comput. \textbf{9} (2009), no.~1-2, 16--35. MR{2519500
 		(2010g:46095)}
 	
 	\bibitem{Kap89}
 	Alexander Kaplan, \emph{Multi-states on {$C^*$}-algebras}, Proc. Amer. Math.
 	Soc. \textbf{106} (1989), no.~2, 437--446. MR{972233 (90a:46162)}
 	
 	\bibitem{KSWR08}
 	Dennis Kretschmann, Dirk Schlingemann, and Reinhard~F. Werner, \emph{A
 		continuity theorem for {S}tinespring's dilation}, J. Funct. Anal.
 	\textbf{255} (2008), no.~8, 1889--1904. MR{2462580 (2009j:46134)}
 	
 	\bibitem{La95}
 	E.~C. Lance, \emph{Hilbert {$C^*$}-modules}, London Mathematical Society
 	Lecture Note Series, vol. 210, Cambridge University Press, Cambridge, 1995, A
 	toolkit for operator algebraists. MR{1325694 (96k:46100)}
 	
 	\bibitem{Pas73}
 	William~L. Paschke, \emph{Inner product modules over {$B^{\ast} $}-algebras},
 	Trans. Amer. Math. Soc. \textbf{182} (1973), 443--468. MR{0355613 (50
 		\#8087)}
 	
 	\bibitem{Sk00}
 	Michael Skeide, \emph{Generalised matrix {$C^*$}-algebras and representations
 		of {H}ilbert modules}, Math. Proc. R. Ir. Acad. \textbf{100A} (2000), no.~1,
 	11--38. MR{1882195 (2002k:46155)}
 	
 	\bibitem{Sk06}
 	Michael Skeide, \emph{Commutants of von {N}eumann correspondences and duality of
 		{E}ilenberg-{W}atts theorems by {R}ieffel and by {B}lecher}, Quantum
 	probability, Banach Center Publ., vol.~73, Polish Acad. Sci. Inst. Math.,
 	Warsaw, 2006, pp.~391--408. MR{2423144 (2009i:46112)}
 	
 	\bibitem{Sk11}
 	Michael Skeide, \emph{Hilbert modules---square roots of positive maps}, Quantum
 	probability and related topics, QP--PQ: Quantum Probab. White Noise Anal.,
 	vol.~27, World Sci. Publ., Hackensack, NJ, 2011, pp.~296--322. MR{2799131}
 	
 	\bibitem{St55}
 	W.~Forrest Stinespring, \emph{Positive functions on {$C^*$}-algebras}, Proc.
 	Amer. Math. Soc. \textbf{6} (1955), 211--216. MR{0069403 (16,1033b)}
 	
 	\bibitem{Sz09}
 	Franciszek~Hugon Szafraniec, \emph{Two-sided weighted shifts are `almost
 		{K}rein' normal}, Spectral theory in inner product spaces and applications,
 	Oper. Theory Adv. Appl., vol. 188, Birkh\"auser Verlag, Basel, 2009,
 	pp.~245--250. MR{2641256 (2011c:47068)}
 	
 	\bibitem{Uh76}
 	A.~Uhlmann, \emph{The ``transition probability'' in the state space of a
 		{$\sp*$}-algebra}, Rep. Mathematical Phys. \textbf{9} (1976), no.~2,
 	273--279. MR{0423089 (54 \#11072)}
 	
 	\bibitem{Uh85}
 	A.~Uhlmann, \emph{The transition probability for states of {$^\ast$}-algebras},
 	Ann. Physik (7) \textbf{42} (1985), no.~4-6, 524--532. MR{825604
 		(87j:46129)}
 	
 \end{thebibliography}
\end{document}